\def\6n{\negthinspace \negthinspace \negthinspace \negthinspace \negthinspace \negthinspace }
\def\5n{\negthinspace \negthinspace \negthinspace \negthinspace \negthinspace }
\def\4n{\negthinspace \negthinspace \negthinspace \negthinspace }
\def\3n{\negthinspace \negthinspace \negthinspace }
\def\2n{\negthinspace \negthinspace }
\def\1n{\negthinspace }
   \def\cA{{\cal A}}
\def\dbE{\mathbb{E}}     
\def\dbF{\mathbb{F}}   \def\cF{{\cal F}}  
\def\dbH{\mathbb{H}}
   \def\cL{{\cal L}}  
   \def\cM{{\cal M}}  
   \def\cN{{\cal N}}  
   \def\cO{{\cal O}}  
\def\dbP{\mathbb{P}}     
   \def\cQ{{\cal Q}}  
\def\dbR{\mathbb{R}}     
\def\dbS{\mathbb{S}}     
   \def\cU{{\cal U}}
\def\ss{\smallskip}                \def\lt{\left}
\def\ms{\medskip}                \def\rt{\right}
               \def\lan{\langle}
\def\ds{\displaystyle}           \def\ran{\rangle}
\def\no{\noindent}        \def\q{\quad}                      \def\llan{\left\langle}
\def\ns{\noalign{\ss}}    \def\qq{\qquad}                    \def\rran{\right\rangle}
    \def\hb{\hbox}                     \def\blan{\big\langle}
                   \def\bran{\big\rangle}
         \def\rf{\eqref}                    \def\Blan{\Big\langle}
  \def\deq{\triangleq}               \def\Bran{\Big\rangle}
 \def\ae{\hbox{\rm a.e.}}           \def\({\Big (}
\def\les{\leqslant}       \def\as{\hbox{\rm a.s.}}           \def\){\Big )}
\def\ges{\geqslant}          \def\[{\Big[}
\def\cl{\overline}           \def\]{\Big]}
          \def\tr{\hbox{\rm tr$\,$}}         \def\cd{\cdot}
           \def\cl{\overline}
\def\a{\alpha}        \def\G{\Gamma}   \def\g{\gamma}
\def\b{\beta}         \def\D{\Delta}   \def\d{\delta}
\def\e{\varepsilon}     \def\l{\lambda}
    \def\Si{\Sigma}  
           \def\O{\Omega}  
          \def\f{\varphi}  \def\i{\infty}
\def\bde{\begin{definition}\label}    \def\ede{\end{definition}}
\def\bt{\begin{theorem}\label}        \def\et{\end{theorem}}
\def\bc{\begin{corollary}\label}      \def\ec{\end{corollary}}
\def\bl{\begin{lemma}\label}          \def\el{\end{lemma}}
\def\bp{\begin{proposition}\label}    \def\ep{\end{proposition}}
\def\bas{\begin{assumption}\label}    \def\eas{\end{assumption}}
\def\br{\begin{remark}\label}         \def\er{\end{remark}}
\def\bex{\begin{example}\label}       \def\ex{\end{example}}
\def\ba{\begin{array}}                \def\ea{\end{array}}
\def\be{\begin{equation}}
\def\bel{\begin{equation}\label}      \def\ee{\end{equation}}
\def\bea{\begin{eqnarray*}}           \def\eea{\end{eqnarray*}}
\newtheorem{theorem}{\indent Theorem}[section]
\newtheorem{definition}[theorem]{\indent Definition}
\newtheorem{proposition}[theorem]{\indent Proposition}
\newtheorem{corollary}[theorem]{\indent Corollary}
\newtheorem{lemma}[theorem]{\indent Lemma}
\newtheorem{remark}[theorem]{\indent Remark}
\newtheorem{example}[theorem]{\indent Example}
\newtheorem{assumption}[theorem]{\indent Assumption}
\begin{document}

\title{\bf Linear Quadratic Optimal Control Problems for Mean-Field Backward Stochastic
           Differential Equations}
\author{Xun Li\thanks{Department of Applied Mathematics, The Hong Kong Polytechnic University,
                      Hong Kong, China (malixun@polyu.edu.hk). This author was partially supported
                      by Hong Kong RGC under grants 15209614, 15224215 and 15255416.}~,\q
Jingrui Sun\thanks{Corresponding author, Department of Mathematics, National University of Singapore,
                   119076, Republic of Singapore (sjr@mail.ustc.edu.cn). This author was partially
                   supported by the National Natural Science Foundation of China (11401556) and the
                   Fundamental Research Funds for the Central Universities (WK 2040000012).}~,\q and \q
Jie Xiong\thanks{Department of Mathematics, University of Macau, Macau, China (jiexiong@umac.mo).
                 This author acknowledges the financial support from FDCT 025/2016/A1 and MYRG2014-00015-FST.}}


\maketitle

\no\bf Abstract: \rm
This paper is concerned with linear quadratic optimal control problems for mean-field backward
stochastic differential equations (MF-BSDEs, for short) with deterministic coefficients.
The optimality system, which is a linear mean-field forward-backward stochastic differential
equation with constraint, is obtained by a variational method.
By decoupling the optimality system, two coupled Riccati equations and an MF-BSDE are derived.
It turns out that the coupled two Riccati equations are uniquely solvable.
Then a complete and explicit representation is obtained for the optimal control.

\ms

\no\bf Key words: \rm
linear quadratic optimal control, mean-field backward stochastic differential equation,
Riccati equation, optimality system, decoupling

\ms

\no\bf AMS subject classifications. \rm 49N10, 49N35, 93E20

\section{Introduction}
The mean-field type stochastic control problem is importance in various fields such as
science, engineering, economics, management, and particularly in financial investment.
The theory of mean-field forward stochastic differential equations (MF-FSDEs, for short)
can be traced back to Kac \cite{Kac 1956} who presented the McKean-Vlasov stochastic
differential equation motivated by a stochastic toy model for the Vlasov kinetic equation
of plasma. Since then, research on related topics and their applications has become a
notable and serious endeavor among researchers in applied probability and optimal stochastic
controls, particularly in financial engineering. Typical representatives include,
but not limited to, McKean \cite{McKean 1966}, Dawson \cite{Dawson 1983}, Chan \cite{Chan 1994},
Buckdahn--Djehiche--Li--Peng \cite{Buckdahn-Djehiche-Li-Peng 2009},
Buckdahn--Li--Peng \cite{Buckdahn-Li-Peng 2009}, Borkar--Kumar \cite{Borkar-Kumar 2010},
Crisan--Xiong \cite{Crisan-Xiong 2010}, Andersson--Djehiche \cite{Andersson-Djehiche 2011},
Buckdahn--Djehiche--Li \cite{Buckdahn-Djehiche-Li 2011},
Meyer-Brandis--Oksendal--Zhou \cite{Meyer-Brandis-Oksendal-Zhou 2012},
Yong \cite{Yong 2013,Yong 2015}, Sun \cite{Sun 2016}, and Li--Sun--Yong \cite{Li-Sun-Yong 2016}.
The MF-FSDEs can be treated in a forward-looking way by starting with the initial state.
In financial investment, however, one frequently encounters financial investment problems
with future conditions (as random variables) specified. This naturally results in a
{\it mean-field backward stochastic differential equation} (MF-BSDE, for short) with a
given terminal condition (see Buckdahn--Djehiche--Li--Peng \cite{Buckdahn-Djehiche-Li-Peng 2009}
and Buckdahn--Li--Peng \cite{Buckdahn-Li-Peng 2009}).
This is an important and challenging research topic. Recently there has been increasing interest
in studying this type of stochastic control problems as well as their applications.
The optimal stochastic control problems under MF-BSDEs are underdeveloped in the literature, and
therefore many fundamental questions remain open and methodologies need to be significantly improved.

\ms

Let $(\O,\cF,\dbF,\dbP)$ be a complete filtered probability space on which a standard one-dimensional
Brownian motion $W=\{W(t); 0\les t<\i\}$ is defined, where $\dbF=\{\cF_t\}_{t\ges0}$ is the natural
filtration of $W$ augmented by all the $\dbP$-null sets in $\cF$. Consider the following controlled
linear MF-BSDE:
\bel{16Aug22-15:00}\left\{\2n\ba{ll}
\ds dY(s)=\Big\{A(s)Y(s)+\bar A(s)\dbE[Y(s)]+B(s)u(s)+\bar B(s)\dbE[u(s)]\\
\ns\hphantom{dY(s)=\Big\{}\ds+C(s)Z(s)+\bar C(s)\dbE[Z(s)]\Big\}ds+Z(s)dW(s),\q s\in[t,T],\\
\ns\ds Y(T)=\xi, \ea\right.\ee
where $A(\cd)$, $\bar A(\cd)$, $B(\cd)$, $\bar B(\cd)$, $C(\cd)$, $\bar C(\cd)$, $D(\cd)$, $\bar D(\cd)$
are given deterministic matrix-valued functions; $\xi$ is an $\cF_T$-measurable random vector; and $u(\cd)$
is the {\it control process}. The class of {\it admissible controls} for \rf{16Aug22-15:00} is
$$\cU[t,T]=\lt\{u:[t,T]\times\O\to\dbR^m\bigm|u(\cd)~\hb{is $\dbF$-progressively measurable,~}
\dbE\int_t^T|u(s)|^2ds<\i\rt\}.$$
Under some mild conditions on the coefficients of equation \rf{16Aug22-15:00}, for any terminal state
$\xi\in L^2_{\cF_T}(\O;\dbR^n)$ (the set of all $\cF_T$-measurable, square-integrable $\dbR^n$-valued
processes) and any admissible control $u(\cd)\in\cU[t,T]$, equation \rf{16Aug22-15:00} admits a unique
square-integrable adapted solution $(Y(\cd),Z(\cd))\equiv(Y(\cd\,;\xi,u(\cd)),Z(\cd\,;\xi,u(\cd)))$,
which is called the {\it state process} corresponding to $\xi$ and $u(\cd)$.
Now we introduce the following cost functional:
\bel{16Aug22-15:30}\ba{ll}
\ds J(t,\xi;u(\cd))\deq\dbE\bigg\{\lan GY(t),Y(t)\ran+\blan\bar G\dbE[Y(t)],\dbE[Y(t)]\bran\\
\ns\hphantom{J(t,\xi;u(\cd))\deq\dbE\bigg\{}\ds
+\int_t^T\[\lan Q(s)Y(s),Y(s)\ran+\blan\bar Q(s)\dbE[Y(s)],\dbE[Y(s)]\bran\\
\ns\hphantom{J(t,\xi;u(\cd))\deq\dbE\bigg\{+\int_t^T\[}\ds
+\lan R_1(s)Z(s),Z(s)\ran+\blan\bar R_1(s)\dbE[Z(s)],\dbE[Z(s)]\bran\\
\ns\hphantom{J(t,\xi;u(\cd))\deq\dbE\bigg\{+\int_t^T\[}\ds
+\lan R_2(s)u(s),u(s)\ran+\blan\bar R_2(s)\dbE[u(s)],\dbE[u(s)]\bran\]ds\bigg\},
\ea\ee
where $G$, $\bar G$ are symmetric matrices and $Q(\cd)$, $\bar Q(\cd)$, $R_i(\cd)$, $\bar R_i(\cd)$
$(i=1,2)$ are deterministic, symmetric matrix-valued functions. Our mean-field backward stochastic
linear quadratic (LQ, for short) optimal control problem can be stated as follows.

\ms

\bf Problem (MF-BSLQ). \rm For any given terminal state $\xi\in L^2_{\cF_T}(\O;\dbR^n)$,
find a $u^*(\cd)\in\cU[t,T]$ such that
\bel{16Aug22-15:50}J(t,\xi;u^*(\cd))=\inf_{u(\cd)\in\cU[t,T]}J(t,\xi;u(\cd))\deq V(t,\xi).\ee

Any $u^*(\cd)\in\cU[t,T]$ satisfying \rf{16Aug22-15:50} is called an {\it optimal control} of
Problem (MF-BSLQ) for the terminal state $\xi$, the corresponding $(Y^*(\cd),Z^*(\cd))
\equiv(Y(\cd\,;\xi,u^*(\cd)),Z(\cd\,;\xi,u^*(\cd)))$ is called an {\it optimal state process},
and the three-tuple $(Y^*(\cd),Z^*(\cd),u^*(\cd))$ is called an {\it optimal triple}.
The function $V(\cd\,,\cd)$ is called the {\it value function} of Problem (MF-BSLQ).
Note that when the mean-field part is absent, Problem (MF-BSLQ) is reduced to a stochastic LQ
optimal control of backward stochastic differential equations (see Lim--Zhou \cite{Lim-Zhou 2001}
and Zhang \cite{Zhang 2011} for some relevant results). For LQ optimal control problems of forward
stochastic differential equations, the interested reader is referred to, for examples,
\cite{Wonham 1968,Chen-Li-Zhou 1998,Ait Rami-Moore-Zhou 2001,Tang 2003,Sun-Li-Yong 2016} and the
book of Yong--Zhou \cite{Yong-Zhou 1999}.

\ms

The rest of the paper is organized as follows.
Section 2 gives some preliminaries. Among other things, we show Problem (MF-BSLQ) is uniquely
solvable from a Hilbert space viewpoint.
In section 3, we derive the optimality system by a variational method and the coupled two Riccati
equations by a decoupling technique.
Section 4 is devoted to the uniqueness and existence of solutions to the Riccati equations.
In Section 5, we present explicit formulas of the optimal controls and the value function.

\section{Preliminaries}

Throughout this paper, $\dbR^{n\times m}$ is the Euclidean space of all $n\times m$ real matrices,
$\dbS^n$ is the space of all symmetric $n\times n$ real matrices,
$\dbS^n_+$ is the subset of $\dbS^n$ consisting of positive definite matrices,
and $\cl{\dbS^n_+}$ is the closure of $\dbS^n_+$ in $\dbR^{n\times n}$.
When $m=1$, we simply write $\dbR^{n\times m}$ as $\dbR^n$, and when $n=m=1$, we drop the superscript.
Recall that the inner product $\lan\cd\,,\cd\ran$ on $\dbR^{n\times m}$ is given by
$\lan M,N\ran\mapsto\tr(M^\top N)$, where the superscript $\top$ denotes the transpose of matrices
and $\tr(K)$ denotes the trace of a matrix $K$,
and that the induced norm on $\dbR^{n\times m}$ is given by $|M|=\sqrt{\tr(M^\top M)}$.
If no confusion is likely, we shall use $\lan\cd\,,\cd\ran$ for inner products in possibly different
Hilbert spaces, and denote by $|\cd|$ the norm induced by $\lan\cd\,,\cd\ran$.
Let $t\in[0,T)$ and $\dbH$ be a given Euclidean space.
The space of $\dbH$-valued continuous functions on $[t,T]$ is denoted by $C([t,T];\dbH)$,
and the space of $\dbH$-valued, $p$th $(1\les p\les \i)$ power Lebesgue integrable functions on
$[t, T]$ is denoted by $L^p(t,T;\dbH)$.
Further, we introduce the following spaces of random variables and stochastic processes:
\begin{eqnarray*}
L^2_{\cF_T}(\O;\dbH)\3n&=&\3n
\Big\{\xi:\O\to\dbH\bigm|\xi~\hb{is $\cF_T$-measurable,}~\dbE|\xi|^2<\i\Big\},\\
L_\dbF^2(t,T;\dbH)\3n&=&\3n
\Big\{\f:[t,T]\times\O\to\dbH\bigm|\f(\cd)~\hb{is $\dbF$-progressively measurable,~}\\
&~&\hphantom{\Big\{\f:[t,T]\times\O\to\dbH\bigm|}
\dbE\int^T_t|\f(s)|^2ds<\i\Big\},\\
L_\dbF^2(\O;C([t,T];\dbH))\3n&=&\3n
\Big\{\f:[t,T]\times\O\to\dbH\bigm|\f(\cd)~\hb{is $\dbF$-adapted, continuous,~}\\
&~&\hphantom{\Big\{\f:[t,T]\times\O\to\dbH\bigm|}
\dbE\[\sup_{t\les s\les T}|\f(s)|^2\]<\i\Big\}.
\end{eqnarray*}

\ss

Next we introduce the following assumptions  that will be in force throughout this paper.

\ms

{\bf(H1)} The coefficients of the state equation satisfy the following:
$$A(\cd),\bar A(\cd)\in L^1(0,T;\dbR^{n\times n}),
~~B(\cd),\bar B(\cd)\in L^2(0,T;\dbR^{n\times m}),
~~C(\cd),\bar C(\cd)\in L^2(0,T;\dbR^{n\times n}).$$

{\bf(H2)} The weighting coefficients in the cost functional satisfy
$$\left\{\2n\ba{lll}
\ds G,\bar G\in\dbS^n, &\q Q(\cd),\bar Q(\cd)\in L^1(0,T;\dbS^n), \\
\ns R_1(\cd),\bar R_1(\cd)\in L^\i(0,T;\dbS^n), &\q R_2(\cd),\bar R_2(\cd)\in L^\i(0,T;\dbS^m),
\ea\right.$$
and there exists a constant $\d>0$ such that for $\ae~s\in[0,T]$,
$$\left\{\2n\ba{lll}
\ds G,\,G+\bar G\ges0,                &\q Q(s),\,Q(s)+\bar Q(s)\ges0,\\
\ns R_1(s),\,R_1(s)+\bar R_1(s)\ges0, &\q R_2(s),\,R_2(s)+\bar R_2(s)\ges\d I.
\ea\right.$$

Now we present a result concerning the well-posedness of the state equation \rf{16Aug22-15:00}.

\begin{theorem}\label{bt-16Aug19-17:00}\sl
Let {\rm(H1)} hold. Then for any $(\xi, u(\cd))\in L^2_{\cF_T}(\O;\dbR^n)\times \cU[t,T]$,
MF-BSDE \rf{16Aug22-15:00} admits a unique adapted solution
$(Y (\cd),Z(\cd))\in L_\dbF^2(\O;C([t,T];\dbR^n))\times L_\dbF^2(t,T;\dbR^n)$.
Moreover, there exists a constant $K>0$, independent of $\xi$ and $u(\cd)$, such that
$$\dbE\lt[\sup_{t\les s\les T}|Y(s)|^2+\int_t^T|Z(s)|^2ds\rt]
\les K\dbE\lt[|\xi|^2+\int_t^T|u(s)|^2ds\rt].$$
\end{theorem}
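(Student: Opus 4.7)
The plan is to prove existence, uniqueness, and the stability estimate simultaneously by a Picard--Banach fixed-point argument. Since equation \rf{16Aug22-15:00} is linear and its mean-field coupling enters only through the \emph{deterministic} processes $\dbE[Y(\cd)]$ and $\dbE[Z(\cd)]$, the natural strategy is to freeze these two quantities, apply the classical (non mean-field) linear BSDE theory of Pardoux--Peng to the resulting standard BSDE, and then iterate on the frozen pair to obtain a genuine solution of the MF-BSDE.

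Concretely, on the Banach space $\cH[t,T]:=L_\dbF^2(\O;C([t,T];\dbR^n))\times L_\dbF^2(t,T;\dbR^n)$ I would define $\Gamma(y,z):=(Y,Z)$ by letting $(Y,Z)$ be the unique adapted solution of the frozen linear BSDE
\begin{equation*}
\left\{\ba{l}
\ds dY(s)=\big\{A(s)Y(s)+C(s)Z(s)+\varphi(s)\big\}ds+Z(s)dW(s),\q s\in[t,T],\\
\ds Y(T)=\xi,
\ea\right.
\end{equation*}
whose inhomogeneous driver is $\varphi(s):=\bar A(s)\dbE[y(s)]+\bar C(s)\dbE[z(s)]+B(s)u(s)+\bar B(s)\dbE[u(s)]$. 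Under (H1) one has $A\in L^1$, $C\in L^2$, and $\dbE(\int_t^T|\varphi(s)|ds)^2<\i$ by Cauchy--Schwarz, so the classical theory makes $\Gamma$ a well-defined self-map of $\cH[t,T]$ together with the standard $L^2$-estimate. Looking at $(\D Y,\D Z):=\Gamma(y_1,z_1)-\Gamma(y_2,z_2)$, which satisfies a linear BSDE with zero terminal and driver $\bar A(s)\dbE[\D y(s)]+\bar C(s)\dbE[\D z(s)]$, the standard estimate together with Jensen and Cauchy--Schwarz controls the $\cH[t,T]$-norm of $(\D Y,\D Z)$ by a constant multiple of $\int_t^T(|\bar A(s)|^2+|\bar C(s)|^2)ds$ times the input norm. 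By first solving on short subintervals $[T-\d,T],[T-2\d,T-\d],\ldots$ with $\d$ small enough that the above integral is less than a fixed contraction threshold and then pasting the pieces together (alternatively by introducing the weighted norm $\dbE\int_t^Te^{\b s}(|y|^2+|z|^2)ds$ with $\b$ large), $\Gamma$ becomes a strict contraction, and Banach's theorem yields the unique adapted solution in $\cH[t,T]$. The stability estimate in the statement follows by applying the same classical $L^2$-estimate to $(Y,Z)$ itself, viewing the entire mean-field expression as an inhomogeneous driver, and absorbing the resulting $\dbE|Y|^2$- and $\dbE|Z|^2$-integrals by Gronwall's lemma combined with Young's inequality (the latter uses $\bar C\in L^2$ to push the $\dbE|Z|^2$ contribution back onto the left-hand side).

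The main technical obstacle is the appearance of $\bar C(s)\dbE[Z(s)]$ in the driver. In the forward mean-field setting one can simply take expectations of the SDE to obtain a clean ODE for the mean and decouple the problem, but in the backward setting $\dbE[Z(\cd)]$ satisfies no a priori equation -- $Z$ is defined only through martingale representation and lives just in $L^2$ -- so the mean and centred parts of the solution cannot be solved separately. It is exactly this obstruction that forces the contraction set-up above, in which $\dbE[Z]$ is treated as part of a perturbative driver and absorbed quantitatively thanks to the integrability assumption $\bar C(\cd)\in L^2(0,T;\dbR^{n\times n})$ from (H1).
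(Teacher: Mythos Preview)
The paper does not actually prove this theorem: it simply remarks that the argument is ``similar to that of the case without mean-field'' and refers the reader to Sun--Yong, Proposition~2.1. Your Picard/contraction scheme on $\cH[t,T]$, freezing $(\dbE[y],\dbE[z])$ and using the classical linear BSDE estimate for the frozen equation, is exactly the standard way to fill this gap and is correct in outline.

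One small slip: in your contraction step you bound the $\cH$-norm of $(\D Y,\D Z)$ by a constant times $\int_t^T\big(|\bar A(s)|^2+|\bar C(s)|^2\big)ds$. Under (H1) you only have $\bar A(\cd)\in L^1(0,T;\dbR^{n\times n})$, so $\int|\bar A|^2$ need not be finite. The correct estimate, obtained from the a~priori bound $\dbE\big[\sup|\D Y|^2+\int|\D Z|^2\big]\les K\big(\int_t^T|g(s)|ds\big)^2$ applied to the deterministic inhomogeneity $g(s)=\bar A(s)\dbE[\D y(s)]+\bar C(s)\dbE[\D z(s)]$, is
\[
\|(\D Y,\D Z)\|_{\cH}^2
\les 2K\Big(\|\bar A\|_{L^1(t,T)}^2\,\dbE\sup_s|\D y(s)|^2
          +\|\bar C\|_{L^2(t,T)}^2\,\dbE\!\int_t^T|\D z(s)|^2ds\Big),
\]
using $|\dbE[\D y(s)]|\les(\dbE\sup|\D y|^2)^{1/2}$ and Cauchy--Schwarz on the $\bar C$-term. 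This does not affect your strategy: both $\|\bar A\|_{L^1}$ and $\|\bar C\|_{L^2}$ over a short subinterval can be made arbitrarily small, so the subdivision-and-pasting (or the weighted-norm) argument goes through unchanged. The same remark applies when you derive the final stability estimate.
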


Note that (H1) allows the coefficients $A(\cd)$ and $C(\cd)$ to be unbounded, which is a little
different from the standard case \cite{Buckdahn-Djehiche-Li-Peng 2009,Buckdahn-Li-Peng 2009}.
However, the proof of Theorem \ref{bt-16Aug19-17:00} is similar to that of the case without
mean-field. We omit the proof here and refer the interested reader to
Sun--Yong \cite[Proposition 2.1]{Sun-Yong 2014} for details.

\ms

From Theorem \ref{bt-16Aug19-17:00}, one can easily see that under (H1)--(H2), Problem (MF-BSLQ)
makes sense. The following result tells us that under (H1)--(H2), Problem (MF-BSLQ) is actually
uniquely solvable for any terminal state $\xi\in L^2_{\cF_T}(\O;\dbR^n)$.

\begin{theorem}\label{bt-16Aug19-20:50}\sl
Let {\rm(H1)--(H2)} hold. Then for any terminal state $\xi\in L^2_{\cF_T}(\O;\dbR^n)$,
Problem {\rm(MF-BSLQ)} admits a unique optimal control.
\end{theorem}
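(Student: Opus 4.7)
The strategy is to recast Problem (MF-BSLQ) as the minimization of a continuous, strictly convex, coercive quadratic functional on the Hilbert space $\cU[t,T]=L^2_\dbF(t,T;\dbR^m)$, and then to invoke the direct method.

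By linearity of the state equation and the estimate in Theorem \ref{bt-16Aug19-17:00}, the map $(\xi,u(\cd))\mapsto(Y(\cd),Z(\cd))$ is bounded and affine. Writing $(Y^{\xi,0},Z^{\xi,0})$ for the solution corresponding to $(\xi,0)$ and $(Y^{0,u},Z^{0,u})$ for the one corresponding to $(0,u(\cd))$, superposition gives $Y=Y^{\xi,0}+Y^{0,u}$ and $Z=Z^{\xi,0}+Z^{0,u}$. Substituting into \rf{16Aug22-15:30} and expanding the quadratic form, the cost functional takes the shape
$$J(t,\xi;u(\cd))=\lan\cM u,u\ran_{\cU}+2\lan\cN\xi,u\ran_{\cU}+c(\xi),$$
where $\cM:\cU[t,T]\to\cU[t,T]$ is bounded self-adjoint, $\cN:L^2_{\cF_T}(\O;\dbR^n)\to\cU[t,T]$ is bounded linear, and $c(\xi)$ does not depend on $u(\cd)$. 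Unique solvability will follow as soon as $\cM\ges\d I$.

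The crux is this coercivity estimate. For any symmetric matrices $R,\bar R$ and any square-integrable random vector $\f$, writing $\f=(\f-\dbE[\f])+\dbE[\f]$ and noting the vanishing of the cross terms yields
$$\dbE\lan R\f,\f\ran+\lan\bar R\dbE[\f],\dbE[\f]\ran=\dbE\lan R(\f-\dbE[\f]),\f-\dbE[\f]\ran+\lan(R+\bar R)\dbE[\f],\dbE[\f]\ran.$$
I would apply this identity to the $G$-pair at time $t$ and to the $Q$-, $R_1$-, $R_2$-pairs inside the integral in \rf{16Aug22-15:30}, taking $\xi=0$ so that $Y=Y^{0,u}$ and $Z=Z^{0,u}$. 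Under (H2), the assumptions $G,G+\bar G\ges0$, $Q,Q+\bar Q\ges0$, $R_1,R_1+\bar R_1\ges0$ render the first three groups of contributions nonnegative, while $R_2,R_2+\bar R_2\ges\d I$ produces
$$\lan\cM u,u\ran_{\cU}\ges\d\,\dbE\int_t^T|u(s)|^2ds=\d\|u\|^2_{\cU}.$$

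Consequently $\cM$ is positive definite with bounded inverse, $J(t,\xi;\cd)$ is a continuous, strictly convex and coercive quadratic functional on the Hilbert space $\cU[t,T]$, and it admits a unique minimizer $u^*=-\cM^{-1}\cN\xi$ by the direct method. I expect the main obstacle to be precisely the coercivity estimate; once the mean/fluctuation splitting above is carried out, the uniform positivity $R_2,R_2+\bar R_2\ges\d I$ together with the nonnegativity of the other weights supplies the required lower bound, and the rest is standard Hilbert space reasoning.
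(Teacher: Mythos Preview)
Your proposal is correct and follows essentially the same approach as the paper: both exploit the affine dependence of $(Y,Z)$ on $(\xi,u)$ to write $J(t,\xi;\cd)$ as a quadratic functional on $\cU[t,T]$, then use the mean/fluctuation splitting (equivalently, the paper's operator identity $G+\dbE^*\bar G\dbE=(I-\dbE^*)G(I-\dbE)+\dbE^*(G+\bar G)\dbE$) together with {\rm(H2)} to obtain nonnegativity of the $G,Q,R_1$ contributions and uniform coercivity from the $R_2$ term, concluding by standard convex analysis. Your decomposition actually yields the sharper constant $\d$ in place of the paper's $\d/2$, but this is purely cosmetic.
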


\begin{proof}
For any $u(\cd)\in\cU[t,T]$, let $(Y^u(\cd),Z^u(\cd))$ be the unique adapted solution to
\bel{16Aug19-21:20}\left\{\2n\ba{ll}
\ds dY^u(s)=\Big\{A(s)Y^u(s)+\bar A(s)\dbE[Y^u(s)]+B(s)u(s)+\bar B(s)\dbE[u(s)]\\
\ns\hphantom{dY^u(s)=\Big\{}\ds
+C(s)Z^u(s)+\bar C(s)\dbE[Z^u(s)]\Big\}ds+Z^u(s)dW(s),\q s\in[t,T],\\
\ns\ds Y^u(T)=0. \ea\right.\ee
By the linearity of equation \rf{16Aug19-21:20} and Theorem \ref{bt-16Aug19-17:00},
we can define bounded linear operators
$\cL:\cU[t,T]\to L_\dbF^2(\O;C([t,T];\dbR^n))\times L_\dbF^2(t,T;\dbR^n)$
and $\cM:\cU[t,T]\to L^2_{\cF_t}(\O;\dbR^n)$ by $u(\cd)\mapsto (Y^u(\cd),Z^u(\cd))$
and $u(\cd)\mapsto Y^u(t)$, respectively, via the MF-BSDE \rf{16Aug19-21:20}.
Similarly, we can define bounded linear operators
$\cN:L^2_{\cF_T}(\O;\dbR^n)\to L_\dbF^2(\O;C([t,T];\dbR^n))\times L_\dbF^2(t,T;\dbR^n)$
and $\cO:L^2_{\cF_T}(\O;\dbR^n)\to L^2_{\cF_t}(\O;\dbR^n)$ by $\xi\mapsto (Y^\xi(\cd),Z^\xi(\cd))$
and  $\xi\mapsto Y^\xi(t)$, respectively, via the MF-BSDE
\bel{16Aug19-21:36}\left\{\2n\ba{ll}
\ds dY^\xi(s)=\Big\{A(s)Y^\xi(s)+\bar A(s)\dbE\big[Y^\xi(s)\big]
                   +C(s)Z^\xi(s)+\bar C(s)\dbE\big[Z^\xi(s)\big]\Big\}ds\\
\ns\ds\qq\qq\qq\qq\qq\qq~+Z^\xi(s)dW(s),\q s\in[t,T],\\
\ns\ds Y^\xi(T)=\xi. \ea\right.\ee
Then for any $(\xi, u(\cd))\in L^2_{\cF_T}(\O;\dbR^n)\times \cU[t,T]$, the corresponding
state process $(Y(\cd),Z(\cd))$ and the initial value $Y(t)$ are given by
$$(Y(\cd),Z(\cd))^\top=\cL u+\cN\xi,\qq Y(t)=\cM u+\cO\xi.$$
Now let $\cA^*$ denote the adjoint of an operator $\cA$, and define the bounded linear operator
$$\cQ\deq\begin{pmatrix}Q+\dbE^*\bar Q\dbE & 0\\0 & R_1+\dbE^*\bar R_1\dbE\end{pmatrix}$$
so that
$$\ba{ll}
\ds\dbE\int_t^T\[\lan Q(s)Y(s),Y(s)\ran+\blan\bar Q(s)\dbE[Y(s)],\dbE[Y(s)]\bran\\
\ns\hphantom{\dbE\int_t^T\[}\ds
+\lan R_1(s)Z(s),Z(s)\ran+\blan\bar R_1(s)\dbE[Z(s)],\dbE[Z(s)]\bran\]ds\\
\ns\ds=\llan\begin{pmatrix}Q+\dbE^*\bar Q\dbE & 0\\0 & R_1+\dbE^*\bar R_1\dbE\end{pmatrix}
            \begin{pmatrix}Y\\Z\end{pmatrix},\begin{pmatrix}Y\\Z\end{pmatrix}\rran\\
\ns\ds=\lan\cQ(\cL u+\cN\xi),\cL u+\cN\xi\ran,
\ea$$
where $\dbE:L^2_\dbF(t,T;\dbH)\to L^2(t,T;\dbH)$ is defined by $\dbE[Y](s)=\dbE[Y(s)]$.
Note that $\dbE^*:L^2(t,T;\dbH)\to L^2_\dbF(t,T;\dbH)$ is the adjoint operator. Thus,
\begin{eqnarray*}
J(t,\xi;u(\cd))\3n&=&\3n\blan(G+\dbE^*\bar G\dbE)(\cM u+\cO\xi),\cM u+\cO\xi\bran\\
\3n&~&\3n+\,\lan\cQ(\cL u+\cN\xi),\cL u+\cN\xi\ran+\blan(R_2+\dbE^*\bar R_2\dbE)u,u\bran\\
\3n&=&\3n\blan\big[\cM^*(G+\dbE^*\bar G\dbE)\cM+\cL^*\cQ\cL+(R_2+\dbE^*\bar R_2\dbE)\big]u,u\bran\\
\3n&~&\3n+\,2\blan\big[\cO^*(G+\dbE^*\bar G\dbE)\cM+\cN^*\cQ\cL\big]u,\xi\bran\\
\3n&~&\3n+\,\blan\big[\cO^*(G+\dbE^*\bar G\dbE)\cO+\cN^*\cQ\cN\big]\xi,\xi\bran.
\end{eqnarray*}
Note that under (H2), we have
$$G+\dbE^*\bar G\dbE=G-\dbE^*G\dbE+\dbE^*(G+\bar G)\dbE=(I-\dbE^*)G(I-\dbE)+\dbE^*(G+\bar G)\dbE\ges0.$$
Similarly, we can prove
$$\cQ\ges0,\qq R_2+\dbE^*\bar R_2\dbE\ges0.$$
Therefore, the map $u(\cd)\mapsto J(t,\xi;u(\cd))$ is convex and continuous.
Moreover, for any $u(\cd)\in\cU[t,T]$, we have by (H2):
\begin{eqnarray*}
&&\blan(R_2+\dbE^*\bar R_2\dbE)u,u\bran
=\dbE\int_t^T\Big\{\lan R_2(s)u(s),u(s)\ran+\blan\bar R_2(s)\dbE[u(s)],\dbE[u(s)]\bran\Big\}ds\\
&&={\d\over2}\dbE\int_t^T|u(s)|^2ds+\int_t^T\Big\{\dbE\lan [R_2(s)-\d/2]u(s),u(s)\ran
-\lan [R_2(s)-\d/2]\dbE[u(s)],\dbE[u(s)]\ran\Big\}ds\\
&&\hphantom{=}+\int_t^T\blan[R_2(s)+\bar R_2(s)-\d/2]\dbE[u(s)],\dbE[u(s)]\bran ds\\
&&\ges{\d\over2}\dbE\int_t^T|u(s)|^2ds.
\end{eqnarray*}
This further implies the map $u(\cd)\mapsto J(t,\xi;u(\cd))$ is strictly convex, and that
$$J(t,\xi;u(\cd))\to\i \q\hb{as}\q \dbE\int_t^T|u(s)|^2ds\to\i.$$
Therefore, by the basic theorem in convex analysis, for any given $\xi\in L^2_{\cF_T}(\O;\dbR^n)$,
Problem (MF-BSLQ) has a unique optimal control.
\end{proof}

\section{Optimality system, decoupling, and Riccati equations}

Let us first derive the optimality system for the optimal control of Problem (MF-BSLQ).
For simplicity of notation, in what follows we shall often suppress the time variable $s$
if no confusion can arise.

\bt{bt-16Aug22-16:06}\sl Let {\rm(H1)--(H2)} hold. Let $(Y^*(\cd),Z^*(\cd),u^*(\cd))$ be the
optimal triple for the terminal state $\xi\in L^2_{\cF_T}(\O;\dbR^n)$.
Then the solution $X^*(\cd)$ to the mean-field forward stochastic differential equation
(MF-FSDE, for short)
\bel{16Aug22-16:00}\left\{\2n\ba{ll}
\ds dX^*=\lt\{-A^\top X^*-\bar A^\top\dbE[X^*]+QY^*+\bar Q\dbE[Y^*]\rt\}ds\\
\ns\hphantom{dX^*=}\ds
+\lt\{-C^\top X^*-\bar C^\top\dbE[X^*]+R_1Z^*+\bar R_1\dbE[Z^*]\rt\}dW,\q s\in[t,T],\\
\ns\ds X^*(t)=GY^*(t)+\bar G\dbE[Y^*(t)],\ea\right.\ee
satisfies
\bel{16Aug22-16:10}R_2u^*+\bar R_2\dbE[u^*]-B^\top X^*-\bar B^\top\dbE[X^*]=0,\q\ae~s\in[t,T],~\as\ee
\end{theorem}

\begin{proof}
For any $u(\cd)\in\cU[t,T]$ and any $\e\in\dbR$, let $(Y(\cd),Z(\cd))$ be the solution of
$$\left\{\2n\ba{ll}
\ds dY=\lt\{AY+\bar A\dbE[Y]+Bu+\bar B\dbE[u]+CZ+\bar C\dbE[Z]\rt\}ds+ZdW,\q s\in[t,T],\\
\ns\ds Y(T)=0,\ea\right.$$
and let $(Y^\e(\cd),Z^\e(\cd))$ be the solution to the perturbed state equation
$$\left\{\2n\ba{ll}
\ds dY^\e=\lt\{AY^\e+\bar A\dbE[Y^\e]+B(u^*+\e u)+\bar B\dbE[u^*+\e u]
+CZ^\e+\bar C\dbE[Z^\e]\rt\}ds+Z^\e dW,\\
\ns\ds Y^\e(T)=\xi.\ea\right.$$
It is clear that $(Y^\e(\cd),Z^\e(\cd))=(Y^*(\cd)+\e Y(\cd),Z^*(\cd)+\e Z(\cd))$, and hence
\begin{eqnarray*}
&&J(t,\xi;u^*(\cd)+\e u(\cd))-J(t,\xi;u^*(\cd))\\
&&=2\e\dbE\bigg\{\lan GY^*(t),Y(t)\ran+\blan\bar G\dbE[Y^*(t)],\dbE[Y(t)]\bran
+\int_t^T\(\lan QY^*,Y\ran+\lan R_1Z^*,Z\ran+\lan R_2u^*,u\ran\)ds\\
&&\qq\qq+\int_t^T\(\blan\bar Q\dbE[Y^*],\dbE[Y]\bran+\lan\bar R_1\dbE[Z^*],\dbE[Z]\bran
+\blan\bar R_2\dbE[u^*],\dbE[u]\bran\)ds\bigg\}\\
&&\hphantom{=}+\e^2\dbE\bigg\{\lan GY(t),Y(t)\ran+\blan\bar G\dbE[Y(t)],\dbE[Y(t)]\bran
+\int_t^T\(\lan QY,Y\ran+\lan R_1Z,Z\ran+\lan R_2u,u\ran\)ds\\
&&\qq\qq+\int_t^T\(\blan\bar Q\dbE[Y],\dbE[Y]\bran+\lan\bar R_1\dbE[Z],\dbE[Z]\bran
+\blan\bar R_2\dbE[u],\dbE[u]\bran\)ds\bigg\}.
\end{eqnarray*}
Applying It\^o's formula to $s\mapsto\lan X^*(s),Y(s)\ran$, we have
\begin{eqnarray*}
&&-\dbE\big\{\lan GY^*(t),Y(t)\ran+\blan\bar G\dbE[Y^*(t)],\dbE[Y(t)]\bran\big\}
=-\dbE\blan GY^*(t)+\bar G\dbE[Y^*(t)],Y(t)\bran\\
&&=\dbE\int_t^T\Big\{\blan QY^*+\bar Q\dbE[Y^*],Y\bran+\blan R_1Z^*+\bar R_1\dbE[Z^*],Z\bran
+\blan B^\top X^*+\bar B^\top\dbE[X^*],u\bran\Big\}ds.
\end{eqnarray*}
It follows that for any $u(\cd)\in\cU[t,T]$ and any $\e\in\dbR$,
\begin{eqnarray*}
&&J(t,\xi;u^*(\cd)+\e u(\cd))-J(t,\xi;u^*(\cd))\\
&&=2\e\dbE\int_t^T\blan R_2u^*+\bar R_2\dbE[u^*]-B^\top X^*-\bar B^\top\dbE[X^*],u\bran ds\\
&&\hphantom{=}+\e^2\dbE\bigg\{\lan GY(t),Y(t)\ran+\blan\bar G\dbE[Y(t)],\dbE[Y(t)]\bran
+\int_t^T\(\lan QY,Y\ran+\lan R_1Z,Z\ran+\lan R_2u,u\ran\)ds\\
&&\qq\qq~+\int_t^T\(\blan\bar Q\dbE[Y],\dbE[Y]\bran+\lan\bar R_1\dbE[Z],\dbE[Z]\bran
+\blan\bar R_2\dbE[u],\dbE[u]\bran\)ds\bigg\}.
\end{eqnarray*}
Since $u^*(\cd)$ is the optimal control of Problem (MF-BSLQ) for the terminal state $\xi$,
dividing by $\e$ in the above and then letting $\e\to0$, we obtain
$$\dbE\int_t^T\blan R_2u^*+\bar R_2\dbE[u^*]-B^\top X^*-\bar B^\top\dbE[X^*],u\bran ds=0,
\q\forall u(\cd)\in\cU[t,T],$$
from which \rf{16Aug22-16:10} follows immediately.
\end{proof}

From the above result, we see that if $u(\cd)$ happens to be an optimal control of
Problem (MF-BSLQ) for terminal state $\xi$, then the following mean-field forward-backward
stochastic differential equation (MF-FBSDE, for short) admits an adapted solution
$(X(\cd),Y(\cd),Z(\cd))$:
\bel{16Aug22-16:20}\left\{\2n\ba{ll}
\ds dX=\lt\{-A^\top X-\bar A^\top\dbE[X]+QY+\bar Q\dbE[Y]\rt\}ds\\
\ns\hphantom{dX=}\ds+\lt\{-C^\top X-\bar C^\top\dbE[X]+R_1Z+\bar R_1\dbE[Z]\rt\}dW,\q s\in[t,T],\\
\ns\ds dY=\Big\{AY+\bar A\dbE[Y]+Bu+\bar B\dbE[u]+CZ+\bar C\dbE[Z]\Big\}ds+ZdW,\q s\in[t,T],\\
\ns\ds X(t)=GY(t)+\bar G\dbE[Y(t)], \qq Y(T)=\xi,
\ea\right.\ee
and the following {\it stationarity condition} holds:
\bel{16Aug22-16:30}R_2u+\bar R_2\dbE[u]-B^\top X-\bar B^\top\dbE[X]=0,\q\ae~s\in[t,T],~\as\ee
We call \rf{16Aug22-16:20}, together with the stationarity condition \rf{16Aug22-16:30},
the {\it optimality system} for the optimal control of Problem (MF-BSLQ).
Note that \rf{16Aug22-16:30} brings a coupling into the MF-FBSDE \rf{16Aug22-16:20}
and does not provide a representation for $u(\cd)$ because the equation for $X(\cd)$ involves
$Y(\cd)$ and $Z(\cd)$.

\ms

To solve the optimality system \rf{16Aug22-16:20}--\rf{16Aug22-16:30}, we use the decoupling
technique inspired by the four-step scheme introduced in \cite{Ma-Protter-Yong 1994,Ma-Yong 1994}
for general FBSDEs. This will lead to a derivation of two Riccati equations.
To be precise, we conjecture that $X(\cd)$ and $Y(\cd)$ are related by the following:
\bel{16Aug22-17:00}Y(s)=-\Si(s)\big\{X(s)-\dbE[X(s)]\big\}-\G(s)\dbE[X(s)]-\f(s),\q s\in[t,T],\ee
where $\Si(\cd),\G(\cd):[0,T]\to\dbS^n$ are absolutely continuous and $\f(\cd)$ satisfies
\bel{16Aug22-17:10}d\f(s)=\a(s) ds+\b(s) dW(s),\qq \f(T)=-\xi,\ee
for some $\dbF$-progressively measurable processes $\a(\cd)$ and $\b(\cd)$. Note that
\bel{16Aug22-17:22}\left\{\2n\ba{ll}
\ds d\dbE[X]=\lt\{-\big(A+\bar A\big)^\top\dbE[X]+\big(Q+\bar Q\big)\dbE[Y]\rt\}ds\\
\ns d\dbE[Y]=\Big\{\big(A+\bar A\big)\dbE[Y]+\big(B+\bar B\big)\dbE[u]+\big(C+\bar C\big)\dbE[Z]\Big\}ds,\\
\ns \dbE[X(t)]=\big(G+\bar G\big)\dbE[Y(t)],\qq \dbE[Y(T)]=\dbE[\xi],\\
\ns \big(R_2+\bar R_2\big)\dbE[u]-\big(B+\bar B\big)^\top \dbE[X]=0.
\ea\right.\ee
Thus,
\bel{16Aug22-17:38-1}\left\{\2n\ba{ll}
\ds d\big(X-\dbE[X]\big)=\lt\{-A^\top\big(X-\dbE[X]\big)+Q\big(Y-\dbE[Y]\big)\rt\}ds\\
\ns\hphantom{d\big(X-\dbE[X]\big)=}+\Big\{\1n-C^\top X-\bar C^\top\dbE[X]+R_1Z+\bar R_1\dbE[Z]\Big\}dW,\\
\ns d\big(Y-\dbE[Y]\big)=\Big\{A\big(Y-\dbE[Y]\big)+B\big(u-\dbE[u]\big)+C\big(Z-\dbE[Z]\big)\Big\}ds+ZdW,\\
\ns X(t)-\dbE[X(t)]=G\big(Y(t)-\dbE[Y(t)]\big),\q~ Y(T)-\dbE[Y(T)]=\xi-\dbE[\xi],\\
\ns R_2\big(u-\dbE[u]\big)-B^\top\big(X-\dbE[X]\big)=0.
\ea\right.\ee
From \rf{16Aug22-17:00} we have
\bel{16Aug22-17:38-2}Y-\dbE[Y]=-\Si\big(X-\dbE[X]\big)-\big(\f-\dbE[\f]\big),\qq\dbE[Y]=-\G\dbE[X]-\dbE[\f].\ee
Denoting $\eta(\cd)=\f(\cd)-\dbE[\f(\cd)]$ and $\g(\cd)=\a(\cd)-\dbE[\a(\cd)]$,
we have from \rf{16Aug22-17:10} that
\bel{16Aug22-17:38-3}d\eta(s)=\g(s) ds+\b(s) dW(s),\qq \eta(T)=\dbE[\xi]-\xi.\ee
Then \rf{16Aug22-17:38-1}--\rf{16Aug22-17:38-3} yield
\begin{eqnarray*}
0 \3n&=&\3n d(Y-\dbE[Y])+\dot\Si(X-\dbE[X])ds+\Si d(X-\dbE[X])+d\eta \\
\3n&=&\3n \big\{A(Y-\dbE[Y])+B(u-\dbE[u])+C(Z-\dbE[Z])\big\}ds+ZdW  \\
\3n&~&\3n +\,\dot\Si(X-\dbE[X])ds+\big\{\1n-\Si A^\top(X-\dbE[X])+\Si Q(Y-\dbE[Y])\big\}ds\\
\3n&~&\3n +\,\big\{\1n-\Si C^\top X-\Si\bar C^\top\dbE[X]+\Si R_1Z+\Si\bar R_1\dbE[Z]\big\}dW+\g ds+\b dW\\
\3n&=&\3n \big\{A(Y-\dbE[Y])+B(u-\dbE[u])+C(Z-\dbE[Z])+\dot\Si(X-\dbE[X]) \\
\3n&~&\3n \qq~-\Si A^\top(X-\dbE[X])+\Si Q(Y-\dbE[Y])+\g\big\}ds\\
\3n&~&\3n +\,\big\{Z-\Si C^\top X-\Si\bar C^\top\dbE[X]+\Si R_1Z+\Si\bar R_1\dbE[Z]+\b\big\}dW \\
\3n&=&\3n \big\{-A\Si(X-\dbE[X])-A\eta+BR_2^{-1}B^\top(X-\dbE[X])+C(Z-\dbE[Z]) \\
\3n&~&\3n \qq~+\dot\Si(X-\dbE[X])-\Si A^\top(X-\dbE[X])-\Si Q\Si(X-\dbE[X])-\Si Q\eta+\g\big\}ds\\
\3n&~&\3n +\,\big\{Z-\Si C^\top X-\Si\bar C^\top\dbE[X]+\Si R_1Z+\Si\bar R_1\dbE[Z]+\b\big\}dW \\
\3n&=&\3n \big\{\big(\dot\Si-A\Si-\Si A^\top-\Si Q\Si+BR_2^{-1}B^\top\big)(X-\dbE[X])\\
\3n&~&\3n \qq~+C(Z-\dbE[Z])-(A+\Si Q)\eta+\g\big\}ds\\
\3n&~&\3n +\,\big\{Z-\Si C^\top X-\Si\bar C^\top\dbE[X]+\Si R_1Z+\Si\bar R_1\dbE[Z]+\b\big\}dW.
\end{eqnarray*}
This implies
\begin{eqnarray}
\label{16Aug22-20:00-1}
&\big(\dot\Si\1n-\1nA\Si\1n-\1n\Si A^\top\1n-\1n\Si Q\Si\1n+\1nBR_2^{-1}B^\top\big)
(X\1n-\1n\dbE[X])\1n+\1nC(Z\1n-\1n\dbE[Z])\1n-\1n(A\1n+\1n\Si Q)\eta+\g=0,&\\
\label{16Aug22-20:00-2}
&Z-\Si C^\top X-\Si\bar C^\top\dbE[X]+\Si R_1Z+\Si\bar R_1\dbE[Z]+\b=0.&
\end{eqnarray}
Now from \rf{16Aug22-20:00-2} we have
\bel{16Jan12-diffu-1}(I+\Si R_1+\Si\bar R_1)\dbE[Z]-\Si(C+\bar C)^\top\dbE[X]+\dbE[\b]=0.\ee
Subtracting \rf{16Jan12-diffu-1} from \rf{16Aug22-20:00-2}, we obtain
\bel{16Jan12-diffu-2}(I+\Si R_1)(Z-\dbE[Z])-\Si C^\top(X-\dbE[X])+(\b-\dbE[\b])=0.\ee
Assuming that $I+\Si R_1$ and $I+\Si R_1+\Si\bar R_1$ are invertible,
we obtain from \rf{16Jan12-diffu-1} and \rf{16Jan12-diffu-2}:
\begin{eqnarray}
\label{16Jan12-diffu-3}
\dbE[Z]&\3n=\3n&(I+\Si R_1+\Si\bar R_1)^{-1}\big\{\Si(C+\bar C)^\top\dbE[X]-\dbE[\b]\big\},\\
\label{16Jan12-diffu-4}
Z-\dbE[Z]&\3n=\3n&(I+\Si R_1)^{-1}\big\{\Si C^\top(X-\dbE[X])-(\b-\dbE[\b])\big\}.
\end{eqnarray}
Substitution of \rf{16Jan12-diffu-4} into \rf{16Aug22-20:00-1} now gives
\begin{eqnarray*}
&&\lt[\dot\Si-A\Si-\Si A^\top-\Si Q\Si+BR_2^{-1}B^\top+C(I+\Si R_1)^{-1}\Si C^\top\rt](X-\dbE[X])\\
&&\,-\,C(I+\Si R_1)^{-1}(\b-\dbE[\b])-(A+\Si Q)\eta+\g=0,
\end{eqnarray*}
from which one should let
\bel{16Aug22-21:08}\left\{\2n\ba{ll}
\ds \dot\Si-A\Si-\Si A^\top-\Si Q\Si+BR_2^{-1}B^\top+C(I+\Si R_1)^{-1}\Si C^\top=0,\\
\ns \g-C(I+\Si R_1)^{-1}(\b-\dbE[\b])-(A+\Si Q)\eta=0.
\ea\right.\ee
Also, we have from \rf{16Aug22-17:22}, \rf{16Aug22-17:38-2}, and \rf{16Jan12-diffu-3}:
\begin{eqnarray*}
0 &\3n=\3n& {d\over ds}\big(\dbE[Y]+\G\dbE[X]+\dbE[\f]\big)\\
&\3n=\3n& (A+\bar A)\dbE[Y]+(B+\bar B)\dbE[u]+(C+\bar C)\dbE[Z]\\
&\3n~\3n& +\,\dot\G\dbE[X]-\G(A+\bar A)^\top\dbE[X]+\G(Q+\bar Q)\dbE[Y]+\dbE[\a]\\
&\3n=\3n& -\,(A+\bar A)\G\dbE[X]-(A+\bar A)\dbE[\f]+(B+\bar B)(R_2+\bar R_2)^{-1}(B+\bar B)^\top\dbE[X]\\
&\3n~\3n& +\,(C+\bar C)(I+\Si R_1+\Si\bar R_1)^{-1}\big\{\Si(C+\bar C)^\top\dbE[X]-\dbE[\b]\big\}\\
&\3n~\3n& +\,\dot\G\dbE[X]-\G(A+\bar A)^\top\dbE[X]-\G(Q+\bar Q)\G\dbE[X]-\G(Q+\bar Q)\dbE[\f]+\dbE[\a]\\
&\3n=\3n& \big\{\dot\G-(A+\bar A)\G-\G(A+\bar A)^\top-\G(Q+\bar Q)\G
+(B+\bar B)(R_2+\bar R_2)^{-1}(B+\bar B)^\top\\
&\3n~\3n&\hphantom{\big\{\dot\G}+(C+\bar C)(I+\Si R_1+\Si\bar R_1)^{-1}\Si(C+\bar C)^\top\big\}\dbE[X]\\
&\3n~\3n& -\,\big[(A+\bar A)+\G(Q+\bar Q)\big]\dbE[\f]-(C+\bar C)(I+\Si R_1+\Si\bar R_1)^{-1}\dbE[\b]+\dbE[\a].
\end{eqnarray*}
Hence, one should let
\bel{16Aug22-21:15}\left\{\2n\ba{ll}
\ds\dot\G-(A+\bar A)\G-\G(A+\bar A)^\top-\G(Q+\bar Q)\G+(B+\bar B)(R_2+\bar R_2)^{-1}(B+\bar B)^\top\\
\ns\hphantom{\dot\G}+(C+\bar C)(I+\Si R_1+\Si\bar R_1)^{-1}\Si(C+\bar C)^\top=0,\\
\ns\dbE[\a]-\big[(A+\bar A)+\G(Q+\bar Q)\big]\dbE[\f]-(C+\bar C)(I+\Si R_1+\Si\bar R_1)^{-1}\dbE[\b]=0.
\ea\right.\ee
Moreover, comparing the terminal values on both sides of the two equations in \rf{16Aug22-17:38-2},
one has
$$\Si(T)=0,\qq \G(T)=0.$$
Therefore, by \rf{16Aug22-21:08}--\rf{16Aug22-21:15}, we see that $\Si(\cd)$ and $\G(\cd)$
should satisfy the following Riccati-type equations, respectively:
\begin{eqnarray}
&&\label{16Aug22-Ric-Si}\left\{\2n\ba{ll}
\ds\dot\Si-A\Si-\Si A^\top-\Si Q\Si+BR_2^{-1}B^\top+C(I+\Si R_1)^{-1}\Si C^\top=0,\q s\in[0,T],\\
\ns\Si(T)=0, \ea\right.\\
\ns&&\label{16Aug22-Ric-G}\left\{\2n\ba{ll}
\ds\dot\G-(A+\bar A)\G-\G(A+\bar A)^\top-\G(Q+\bar Q)\G+(B+\bar B)(R_2+\bar R_2)^{-1}(B+\bar B)^\top\\
\ns\hphantom{\dot\G}+(C+\bar C)(I+\Si R_1+\Si\bar R_1)^{-1}\Si(C+\bar C)^\top=0,\q s\in[0,T],\\
\ns\G(T)=0, \ea\right.
\end{eqnarray}
and $\f(\cd)$ should satisfy the following MF-BSDE on $[0,T]$:
\bel{16Aug22-f}\left\{\2n\ba{ll}
\ds d\f=\Big\{(A+\Si Q)\f+\big[\bar A+\G(Q+\bar Q)-\Si Q\big]\dbE[\f]+C(I+\Si R_1)^{-1}\b\\
\ns\hphantom{d\f=\Big\{}
+\big[(C+\bar C)(I+\Si R_1+\Si\bar R_1)^{-1}-C(I+\Si R_1)^{-1}\big]\dbE[\b]\Big\}ds+\b dW,\\
\ns \f(T)=-\xi. \ea\right.\ee

\section{Unique solvability of Riccati equatuions}

In this section we shall establish the unique solvability of the Riccati equations
\rf{16Aug22-Ric-Si} and \rf{16Aug22-Ric-G}. Once $\Si(\cd)$ and $\Pi(\cd)$ are known,
the existence of a solution to MF-BSDE \rf{16Aug22-f} will immediately follows from
Theorem \ref{bt-16Aug19-17:00}.

\begin{theorem}\label{bt-16Mar18-21:30}\sl
Let {\rm(H1)--(H2)} hold. Then the Riccati equations \rf{16Aug22-Ric-Si} and \rf{16Aug22-Ric-G}
admit unique solutions $\Si(\cd)\in C([0,T];\cl{\dbS^n_+})$ and $\G(\cd)\in C([0,T];\cl{\dbS^n_+})$,
respectively.
\end{theorem}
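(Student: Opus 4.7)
The plan is to treat the two Riccati equations sequentially: first establish a unique $\Si(\cd)\in C([0,T];\cl{\dbS^n_+})$ solving \rf{16Aug22-Ric-Si}; then, with $\Si$ fixed, show that \rf{16Aug22-Ric-G} reduces to a Riccati equation in $\G$ alone with continuous coefficients plus a continuous non-negative forcing term. The key preliminary observation is that for any $\Si\in\cl{\dbS^n_+}$ and any symmetric $R\ges0$, the identity $\Si(I+R\Si)=(I+\Si R)\Si$ yields
$$(I+\Si R)^{-1}\Si=\Si(I+R\Si)^{-1}=\Si^{1/2}(I+\Si^{1/2}R\Si^{1/2})^{-1}\Si^{1/2},$$
so the inverse is automatically well-defined and the resulting matrix is symmetric and non-negative. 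This legitimates both the nonlinearity $C(I+\Si R_1)^{-1}\Si C^\top$ in \rf{16Aug22-Ric-Si} and the coefficient $(C+\bar C)(I+\Si R_1+\Si\bar R_1)^{-1}\Si(C+\bar C)^\top$ in \rf{16Aug22-Ric-G}, as long as the candidate solution stays in $\cl{\dbS^n_+}$.

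\textbf{The $\Si$-equation.} To solve \rf{16Aug22-Ric-Si} I would identify it with the Riccati equation of an auxiliary \emph{forward} stochastic LQ problem with deterministic coefficients, in the spirit of the SDE-BSDE duality used by Lim--Zhou \cite{Lim-Zhou 2001}. Concretely, one constructs a forward controlled SDE whose drift involves $(A,B)$ and whose diffusion involves $(C,R_1^{1/2})$, together with a quadratic cost whose running weights correspond to $Q$ and $R_2^{-1}$; after a time-reversal $s\mapsto T-s$, the associated HJB equation is exactly \rf{16Aug22-Ric-Si}. Under (H2) the cost is strictly convex (crucially $R_2\ges\d I$), so the auxiliary problem is uniquely solvable for every initial state, and its value function has the form $\lan\Si(s)x,x\ran$; this yields $\Si\in C([0,T];\cl{\dbS^n_+})$ together with the a priori boundedness needed to rule out finite-time blow-up. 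For uniqueness, the difference of two solutions in $C([0,T];\cl{\dbS^n_+})$ satisfies a linear matrix ODE with locally bounded coefficients (since $\Si\mapsto C(I+\Si R_1)^{-1}\Si C^\top$ is locally Lipschitz on $\cl{\dbS^n_+}$), and Gronwall's inequality forces it to vanish.

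\textbf{The $\G$-equation and main obstacle.} With $\Si$ fixed,
$$\Psi(s)\deq(C+\bar C)(I+\Si R_1+\Si\bar R_1)^{-1}\Si(C+\bar C)^\top$$
is a continuous $\cl{\dbS^n_+}$-valued function on $[0,T]$, so \rf{16Aug22-Ric-G} becomes a standard matrix Riccati equation for $\G$ with effective coefficients $(A+\bar A,\,B+\bar B,\,Q+\bar Q,\,R_2+\bar R_2)$ and the continuous non-negative inhomogeneity $\Psi$. Under (H2) these satisfy exactly the same positivity/integrability hypotheses as in the first stage, so repeating the LQ-reformulation argument delivers a unique $\G\in C([0,T];\cl{\dbS^n_+})$. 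The main obstacle is concentrated in the first stage: the nonlinearity $C(I+\Si R_1)^{-1}\Si C^\top$ is not globally Lipschitz in $\Si$, so a direct Picard iteration gives only local existence. The value-function representation provided by the auxiliary LQ problem is the device that delivers global existence on $[0,T]$ and preservation of non-negativity simultaneously; the delicate calculational step is to choose the auxiliary LQ data so that, after time-reversal, its Riccati equation agrees with \rf{16Aug22-Ric-Si} term by term---in particular matching the implicit diffusion term $C(I+\Si R_1)^{-1}\Si C^\top$.
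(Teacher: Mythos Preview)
Your overall strategy---auxiliary forward LQ problem for existence, Gronwall for uniqueness---matches the paper's, and your uniqueness argument is essentially the one the paper gives. The gap is in the existence mechanism.

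You claim that after a time-reversal $s\mapsto T-s$ the Riccati equation of a suitably chosen forward LQ problem becomes \rf{16Aug22-Ric-Si}. But look at the sign pattern: \rf{16Aug22-Ric-Si} carries a term $-\Si Q\Si$ quadratic in $\Si$ and a constant term $+BR_2^{-1}B^\top$, which is the \emph{opposite} of the value-function Riccati $\dot P+PA+A^\top P+Q-PBR^{-1}B^\top P=0$. Time-reversal flips only the sign of $\dot\Si$; it cannot swap the quadratic and constant terms. The device that actually produces this structure is \emph{inversion}: if $P$ solves
$\dot P+PA+A^\top P+Q-P\big[BR_2^{-1}B^\top+C(R_1+P)^{-1}C^\top\big]P=0$,
then $\Si=P^{-1}$ solves \rf{16Aug22-Ric-Si}, via the identity $(R_1+P)^{-1}=(I+\Si R_1)^{-1}\Si$. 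This is what Lim--Zhou do and what the paper does.

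Inversion creates a second difficulty your sketch does not address: the terminal condition $\Si(T)=0$ corresponds to $P(T)=+\i$, so one cannot solve for $P$ directly with that datum. The paper introduces a forward mean-field LQ problem with two controls $(u,v)$, terminal weight $\l|X(T)|^2$, and regularized weight $\e I+R_1$ on $v$; for each $\l,\e>0$ the standard MF-LQ theory yields a Riccati pair $(P_{\l,\e},\Pi_{\l,\e})$. Monotonicity in $\e$ together with strict positivity of the value function (from {\rm(H2)}) lets $\e\to0$, giving positive-definite limits $P_\l,\Pi_\l$; monotonicity in $\l$ then shows that $\Si_\l=P_\l^{-1}$ and $\G_\l=\Pi_\l^{-1}$ decrease in $\dbS^n_+$, hence converge to $\Si,\G\in\cl{\dbS^n_+}$ as $\l\to\i$, and dominated convergence passes the limit through the integral equations. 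Without the inversion and this double-limit ($\e\to0$, $\l\to\i$), you have no route to a global solution on $[0,T]$ with $\Si(T)=0$.

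A minor structural difference: the paper obtains $\Si$ and $\G$ simultaneously, because the auxiliary problem is already of \emph{mean-field} type and its known Riccati theory delivers the coupled pair $(P_{\l,\e},\Pi_{\l,\e})$ in one stroke. Your sequential plan---solve \rf{16Aug22-Ric-Si} first, then treat \rf{16Aug22-Ric-G} with $\Si$ frozen---is also legitimate once the first stage is corrected, since \rf{16Aug22-Ric-G} has exactly the same inverted-Riccati structure and the same inversion/limit argument applies with $(A+\bar A,B+\bar B,Q+\bar Q,R_2+\bar R_2)$ in place of $(A,B,Q,R_2)$ and the known $\Si$-dependent term playing the role of $C(R_1+P)^{-1}C^\top$.
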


\begin{proof}
For $\l>0$ and $\e\ges0$, let us consider the forward stochastic differential equation
(FSDE, for short)
$$\left\{\2n\ba{ll}
\ds dX(s)=\Big\{A(s)X(s)+\bar A(s)\dbE[X(s)]+B(s)u(s)+\bar B(s)\dbE[u(s)]\\
\ns\hphantom{dX(s)=\Big\{}
+C(s)v(s)+\bar C(s)\dbE[v(s)]\Big\}ds+v(s)dW(s),\q s\in[t,T],\\
\ns X(t)=\xi,\ea\right.$$
and the cost functional
\begin{eqnarray*}
&&J_{\l,\e}(t,\xi;u(\cd),v(\cd))\\
&&\deq\dbE\bigg\{\int_t^T\[\lan Q(s)X(s),X(s)\ran+\blan\bar Q(s)\dbE[X(s)],\dbE[X(s)]\bran\\
&&\hphantom{\deq\dbE\bigg\{\int_t^T\[}
+\lan [\e I+R_1(s)]v(s),v(s)\ran+\blan\bar R_1(s)\dbE[v(s)],\dbE[v(s)]\bran\\
&&\hphantom{\deq\dbE\bigg\{\int_t^T\[}
+\lan R_2(s)u(s),u(s)\ran+\blan\bar R_2(s)\dbE[u(s)],\dbE[u(s)]\bran\]ds+\l |X(T)|^2\bigg\}.
\end{eqnarray*}
We pose the following forward mean-field LQ problem: For any given initial pair
$(t,\xi)\in[0,T]\times L^2_{\cF_t}(\O;\dbR^n)$, find a pair
$(u^*(\cd),v^*(\cd))\in L_\dbF^2(t,T;\dbR^m)\times L_\dbF^2(t,T;\dbR^n)$ such that
$$J_{\l,\e}(t,\xi;u^*(\cd),v^*(\cd))
=\inf_{u(\cd),v(\cd)}J_{\l,\e}(t,\xi;u(\cd),v(\cd))\deq V_{\l,\e}(t,\xi)$$
as $(u(\cd),v(\cd))$ ranges over the space $L_\dbF^2(t,T;\dbR^m)\times L_\dbF^2(t,T;\dbR^n)$.
By (H2), we have for any $(t,\xi)\in[0,T]\times L^2_{\cF_t}(\O;\dbR^n)$
and any $(u(\cd),v(\cd))\in L_\dbF^2(t,T;\dbR^m)\times L_\dbF^2(t,T;\dbR^n)$,
\bel{16Aug24-15:45}\ba{ll}
\ds J_{\l,\e}(t,\xi;u(\cd),v(\cd))\\
\ns\ds\ges\int_t^T\Big\{\lan Q(s)\dbE[X(s)],\dbE[X(s)]\ran+\blan\bar Q(s)\dbE[X(s)],\dbE[X(s)]\bran\\
\ns\ds\hphantom{\ges\int_t^T\Big\{}
+\lan R_1(s)\dbE[v(s)],\dbE[v(s)]\ran+\blan\bar R_1(s)\dbE[v(s)],\dbE[v(s)]\bran\\
\ns\ds\hphantom{\ges\int_t^T\Big\{}
+\lan [R_2(s)-\d/2]\dbE[u(s)],\dbE[u(s)]\ran+\blan\bar R_2(s)\dbE[u(s)],\dbE[u(s)]\bran\Big\}ds\\
\ns\ds\hphantom{\ges}
+\e\dbE\int_t^T|v(s)|^2ds+{\d\over2}\dbE\int_t^T|u(s)|^2ds\\
\ns\ds\ges\left(\e\wedge{\d\over2}\right)\dbE\int_t^T\[|v(s)|^2+|u(s)|^2\]ds.
\ea\ee
Then it follows from \cite[Theorem 5.2]{Sun 2016} (see also \cite[Theorem 4.1]{Yong 2013})
that for any $\l,\e>0$, the following two Riccati equations
\bel{16Mar21-P_e}\left\{\2n\ba{ll}
\ds\dot P_{\l,\e}+P_{\l,\e} A+A^\top P_{\l,\e}+Q
-P_{\l,\e}(B,C)\begin{pmatrix}R_2&0\\0&\e I+R_1+P_{\l,\e}\end{pmatrix}^{-1}(B,C)^\top P_{\l,\e}=0,\\
\ns\ds P_{\l,\e}(T)=\l I,\ea\right.\ee
and
\bel{16Mar21-Pi_e}\left\{\2n\ba{ll}
\ds \dot\Pi_{\l,\e}+\Pi_{\l,\e}(A+\bar A)+(A+\bar A)^\top\Pi_{\l,\e}+Q+\bar Q\\
\ns\ds-\,\Pi_{\l,\e}(B+\bar B,C+\bar C)
\begin{pmatrix}R_2\1n+\1n\bar R_2&0\\0&\e I\1n+\1nR_1\1n+\1n\bar R_1\1n+\1nP_{\l,\e}\end{pmatrix}^{-1}
(B+\bar B,C+\bar C)^\top\Pi_{\l,\e}=0,\\
\ns\ds \Pi_{\l,\e}(T)=\l I, \ea\right.\ee
admit unique solutions $P_{\l,\e}(\cd)$ and $\Pi_{\l,\e}(\cd)$, respectively, such that
\bel{16Mar20-21:40}\ba{rr}
\ds V_{\l,\e}(t,\xi)=\dbE\lan P_{\l,\e}(t)(\xi-\dbE[\xi]),\xi-\dbE[\xi]\ran
+\lan\Pi_{\l,\e}(t)\dbE[\xi],\dbE[\xi]\ran,\\
\ns\ds\forall(t,\xi)\in[0,T]\times L^2_{\cF_t}(\O;\dbR^n).\ea\ee
For fixed $\l>0$, we have
\bel{16Aug24-16:10}\ba{rr}
\ds V_{\l,\e}(t,\xi)=\inf_{u(\cd),v(\cd)}J_{\l,\e}(t,\xi;u(\cd),v(\cd))
\les\inf_{u(\cd),v(\cd)}J_{\l,\e^\prime}(t,\xi;u(\cd),v(\cd))=V_{\l,\e^\prime}(t,\xi),\\
\ns\ds\forall (t,\xi)\in[0,T]\times L^2_{\cF_t}(\O;\dbR^n),
\ea\ee
whenever $0\les\e\les\e^\prime$. This implies
\bel{16Aug24-16:15}P_{\l,\e}(t)\les P_{\l,\e^\prime}(t),\q \Pi_{\l,\e}(t)\les \Pi_{\l,\e^\prime}(t),
\qq \forall\, t\in[0,T];\q \forall\,0<\e\les\e^\prime.\ee
On the other hand, we may conclude from \rf{16Aug24-15:45} that
$$V_{\l,0}(t,\xi)>0,\qq\forall (t,\xi)\in[0,T]\times L^2_{\cF_t}(\O;\dbR^n)\hb{~with~}\xi\ne0,$$
which, together with \rf{16Aug24-16:10} and \rf{16Aug24-16:15}, implies that
the limits $\lim_{\e\to0}P_{\l,\e}(t)$ and $\lim_{\e\to0}\Pi_{\l,\e}(t)$ exist, and
$$P_\l(t)\deq\lim_{\e\to0}P_{\l,\e}(t)>0,\q\Pi_\l(t)\deq\lim_{\e\to0}\Pi_{\l,\e}(t)>0,
\qq\forall t\in[0,T].$$
By \rf{16Mar21-P_e}, we get
\begin{eqnarray*}
&& P_{\l,\e}(t)=\l I+\int_t^T\Bigg[P_{\l,\e}A+A^\top P_{\l,\e}+Q \\
&&\hphantom{P_{\l,\e}(t)=\l I+\int_t^T\Bigg[}
-P_{\l,\e}(B,C)\begin{pmatrix}R_2&0\\0&\e I+R_1+P_{\l,\e}\end{pmatrix}^{-1}(B,C)^\top P_{\l,\e}\Bigg]ds.
\end{eqnarray*}
Passing to limit as $\e\to0$, by the bounded convergence theorem, we have
$$P_\l(t)=\l I+\int_t^T\Bigg[P_\l A+A^\top P_\l+Q
-P_\l(B,C)\begin{pmatrix}R_2&0\\0&R_1+P_\l\end{pmatrix}^{-1}(B,C)^\top P_\l\Bigg]ds.$$
Therefore,
$$\left\{\2n\ba{ll}
\ds\dot P_\l+P_\l A+A^\top P_\l+Q
-P_\l(B,C)\begin{pmatrix}R_2&0\\0&R_1+P_\l\end{pmatrix}^{-1}(B,C)^\top P_\l=0,\\
\ns\ds P_\l(T)=\l I.\ea\right.$$
Similarly using \rf{16Mar21-Pi_e}, we have
$$\left\{\2n\ba{ll}
\ds \dot\Pi_\l+\Pi_\l(A+\bar A)+(A+\bar A)^\top\Pi_\l+Q+\bar Q\\
\ns\ds\hphantom{\dot\Pi_\l}
-\Pi_\l(B+\bar B,C+\bar C)
\begin{pmatrix}R_2+\bar R_2&0\\0&R_1+\bar R_1+P_\l\end{pmatrix}^{-1}
(B+\bar B,C+\bar C)^\top\Pi_\l=0,\\
\ns\ds \Pi_\l(T)=\l I. \ea\right.$$
Next, for fixed $\e>0$, we have
\begin{eqnarray*}
V_{\l,\e}(t,\xi)=\inf_{u(\cd),v(\cd)}J_{\l,\e}(t,\xi;u(\cd),v(\cd))
\les\inf_{u(\cd),v(\cd)}J_{\l^\prime,\e}(t,\xi;u(\cd),v(\cd))=V_{\l^\prime,\e}(t,\xi),&&\\
\forall (t,\xi)\in[0,T]\times L^2_{\cF_t}(\O;\dbR^n),&&
\end{eqnarray*}
whenever $0<\l\les\l^\prime$. It follows that
$$P_{\l,\e}(t)\les P_{\l^\prime,\e}(t),
\q \Pi_{\l,\e}(t)\les \Pi_{\l^\prime,\e}(t),\qq \forall t\in[0,T],$$
and hence
$$0<P_\l(t)\les P_{\l^\prime}(t),\q 0<\Pi_\l(t)\les \Pi_{\l^\prime}(t),
\q \forall t\in[0,T];\qq 0<\l\les\l^\prime.$$
Therefore, the families $\{\Si_\l(t)\deq P_\l(t)^{-1}:\l>0\}$
and $\{\G_\l(t)\deq\Pi_\l(t)^{-1}:\l>0\}$ are decreasing in $\dbS^n_+$
and hence converge. We denote
$$\Si(t)=\lim_{\l\to\i}\Si_\l(t)\ges0,\q\G(t)=\lim_{\l\to\i}\G_\l(t)\ges0,\qq t\in[0,T].$$
Now using the fact
$$\left\{\2n\ba{ll}
\no\ss\ds{d\over dt}\big[P_\l(t)^{-1}P_\l(t)\big]=0,
\qq {d\over dt}\big[\Pi_\l(t)^{-1}\Pi_\l(t)\big]=0,\\
\no\ss\ds[R_1(t)+P_\l(t)]^{-1}=[I+P_\l(t)^{-1}R_1(t)]^{-1}P_\l(t)^{-1},\\
\ds[R_1(t)+\bar R_1(t)+P_\l(t)]^{-1}
=\big\{I+P_\l(t)^{-1}[R_1(t)+\bar R_1(t)]\big\}^{-1}P_\l(t)^{-1},
\ea\right.$$
one can easily show that $\Si_\l(\cd)$ is a solution of
\bel{16Mar22-17:24}\left\{\2n\ba{ll}
\ds\dot\Si_\l-A\Si_\l-\Si_\l A^\top-\Si_\l Q\Si_\l+BR_2^{-1}B^\top
+C(I+\Si_\l R_1)^{-1}\Si_\l C^\top=0,\\
\ns\ds \Si_\l(T)=\l^{-1} I,\ea\right.\ee
and $\G_\l(\cd)$ is a solution of
\bel{16Mar22-17:40}\lt\{\2n\ba{ll}
\ds \dot\G_\l-(A+\bar A)\G_\l-\G_\l(A+\bar A)^\top-\G_\l(Q+\bar Q)\G_\l
+(B+\bar B)(R_2+\bar R_2)^{-1}(B+\bar B)^\top\\
\ns\ds\q\1n~+(C+\bar C)\big[I+\Si_\l(R_1+\bar R_1)\big]^{-1}\Si_\l(C+\bar C)^\top=0,\\
\ns\ds \G_\l(T)=\l^{-1} I.\ea\rt.\ee
Note that \rf{16Mar22-17:24} is equivalent to
$$\Si_\l(t)=\l^{-1}I-\int_t^T\[A\Si_\l+\Si_\l A^\top\1n+\Si_\l Q\Si_\l
-BR_2^{-1}B^\top\1n-C(I+\Si_\l R_1)^{-1}\Si_\l C^\top\]ds.$$
Because $\{\Si_\l(t)\}_{\l\ges1}$ and $\{[I+\Si_\l(t)R_1(t)]^{-1}\Si_\l(t)\}_{\l\ges1}$
are uniformly bounded on $[0,T]$, by letting $\l\to\i$, we obtain from the dominated
convergence theorem:
$$\Si(t)=-\int_t^T\[A\Si+\Si A^\top+\Si Q\Si-BR_2^{-1}B^\top-C(I+\Si R_1)^{-1}\Si C^\top\]ds,$$
so $\Si(\cd)$ is a solution of the Riccati equation \rf{16Aug22-Ric-Si}.
Likewise, $\G(\cd)$ is a solution of the Riccati equation \rf{16Aug22-Ric-G}.

\ms

To prove the uniqueness, let us suppose that $\Si_1(\cd),\Si_2(\cd)\in C([0,T];\cl{\dbS^n_+})$
are two solutions of \rf{16Aug22-Ric-Si}.
Then it is easy to show that $\D(\cd)\deq\Si_1(\cd)-\Si_2(\cd)$ is a solution to the equation
$$\left\{\2n\ba{ll}
\ds\dot\D-(A+\Si_1Q)\D-\D(A+\Si_2Q)^\top+C(I+\Si_1 R_1)^{-1}\D\big[I-R_1(I+\Si_2R_1)^{-1}\Si_2\big]C^\top=0,\\
\ns\ds\D(T)=0.\ea\right.$$
Note that the functions $\Si_i$ and $(I+\Si_i R_1)^{-1},~i=1,2$ are bounded on $[0,T]$.
Then a standard argument using the Gronwall inequality will show that $\D(\cd)=0$.
The uniqueness of the solution to equation \rf{16Aug22-Ric-G} is proved similarly.
\end{proof}

\section{Representations of optimal controls and value function}

This section is going to give explicit formulas of the optimal controls and the value function,
via the solutions to the Riccati equations \rf{16Aug22-Ric-Si}, \rf{16Aug22-Ric-G},
and the MF-BSDE \rf{16Aug22-f}. Our first result can be stated as follows.

\begin{theorem}\label{bt-16Mar22-20:00}\sl
Let {\rm(H1)--(H2)} hold and let $\xi\in L^2_{\cF_T}(\O;\dbR^n)$ be given.
Let $\Si(\cd)$ and $\G(\cd)$ be the unique solutions to the Riccati equations \rf{16Aug22-Ric-Si}
and \rf{16Aug22-Ric-G}, respectively,
and let $(\f(\cd),\b(\cd))$ be the unique adapted solution to the MF-BSDE \rf{16Aug22-f}.
Then the following MF-FSDE admits a unique solution $X(\cd)$:
\bel{16Mar19-X}\left\{\2n\ba{ll}
\ds dX=\left\{-\,(A+\Si Q)^\top X-\big[\bar A-\Si Q+\G(Q+\bar Q)\big]^\top \dbE[X]-Q\f-\bar Q\dbE[\f]\right\}ds\\
\ns\ds\qq~~+\Big\{\big[R_1(I+\Si R_1)^{-1}\Si-I\big]C^\top X+\(-\bar C^\top-R_1(I+\Si R_1)^{-1}\Si C^\top\\
\ns\ds\qq\qq~+(R_1+\bar R_1)\big[I+\Si(R_1+\bar R_1)\big]^{-1}\Si(C+\bar C)^\top\)\dbE[X]\\
\ns\ds\qq\qq~
-R_1(I+\Si R_1)^{-1}(\b-\dbE[\b])-(R_1+\bar R_1)\big[I+\Si(R_1+\bar R_1)\big]^{-1}\dbE[\b]\Big\}dW,\\
\ns\ds X(t)=-[I+G\Si(t)]^{-1}G\{\f(t)-\dbE[\f(t)]\}-[I+(G+\bar G)\G(t)]^{-1}(G+\bar G)\dbE[\f(t)],
\ea\right.\ee
and the unique optimal control of Problem {\rm(MF-BSLQ)} for the terminal state $\xi$ is given by
\bel{16Mar19-u*}u=R_2^{-1}B^\top(X-\dbE[X])+(R_2+\bar R_2)^{-1}(B+\bar B)^\top\dbE[X].\ee
\end{theorem}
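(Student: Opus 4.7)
The plan is to reverse the formal decoupling carried out in Section 3: given the Riccati solutions $\Sigma(\cdot),\Gamma(\cdot)$ and the adapted solution $(\varphi(\cdot),\beta(\cdot))$ of \eqref{16Aug22-f}, I will construct a candidate quadruple $(X,Y^*,Z^*,u^*)$ and verify that it satisfies the optimality system \eqref{16Aug22-16:20}--\eqref{16Aug22-16:30}. Optimality of $u^*$ will then follow from the strict convexity and coercivity of $u\mapsto J(t,\xi;u)$ established in the proof of Theorem \ref{bt-16Aug19-20:50}, since for such a functional on a Hilbert space the stationarity condition is both necessary and sufficient.

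First I would check that \eqref{16Mar19-X} is well-posed. The coefficients involve $\Sigma,\Gamma\in C([0,T];\overline{\dbS^n_+})$, together with the inverses $(I+\Sigma R_1)^{-1}$, $(I+\Sigma(R_1+\bar R_1))^{-1}$, $(I+G\Sigma(t))^{-1}$ and $(I+(G+\bar G)\Gamma(t))^{-1}$; these exist and are uniformly bounded on $[0,T]$ because the products of positive semidefinite matrices appearing inside the brackets have nonnegative real eigenvalues, so the matrices are spectrally $\ges I$. Combined with (H1)--(H2) and the fact that $\varphi\in L^2_\dbF(\O;C([0,T];\dbR^n))$ and $\beta\in L^2_\dbF(0,T;\dbR^n)$ (by Theorem \ref{bt-16Aug19-17:00} applied to \eqref{16Aug22-f}), the MF-FSDE \eqref{16Mar19-X} falls within the standard existence/uniqueness framework and admits a unique adapted solution $X(\cdot)\in L^2_\dbF(\O;C([t,T];\dbR^n))$.

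Next I would define the candidate state and adjoint processes by the decoupling relations
\[
Y^*(s):=-\Sigma(s)\{X(s)-\dbE[X(s)]\}-\Gamma(s)\dbE[X(s)]-\varphi(s),
\]
with $Z^*$ prescribed by \eqref{16Jan12-diffu-3}--\eqref{16Jan12-diffu-4}, and $u^*$ by \eqref{16Mar19-u*}. Four items must then be checked: (i) $Y^*(T)=\xi$, which is immediate from $\Sigma(T)=\Gamma(T)=0$ and $\varphi(T)=-\xi$; (ii) the initial condition $X(t)=GY^*(t)+\bar G\dbE[Y^*(t)]$, which follows by solving the two linear systems obtained by taking the mean and the deviation of the relation defining $Y^*(t)$, and matches exactly the initial value prescribed in \eqref{16Mar19-X}; (iii) the MF-BSDE \eqref{16Aug22-16:20} for $(Y^*,Z^*)$ driven by $u^*$; (iv) the stationarity condition \eqref{16Aug22-16:30}, immediate from the mean/deviation decomposition of \eqref{16Mar19-u*}.

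The substantive step is (iii), which I expect to be the main obstacle. To verify it I would apply It\^o's formula to $s\mapsto-\Sigma(s)\{X(s)-\dbE[X(s)]\}-\Gamma(s)\dbE[X(s)]-\varphi(s)$, substitute the dynamics of $X$ from \eqref{16Mar19-X}, the Riccati equations \eqref{16Aug22-Ric-Si}--\eqref{16Aug22-Ric-G} for $\dot\Sigma,\dot\Gamma$, and the MF-BSDE \eqref{16Aug22-f} for $d\varphi$; collecting coefficients of $X-\dbE[X]$, $\dbE[X]$, $\beta-\dbE[\beta]$, $\dbE[\beta]$, $\varphi-\dbE[\varphi]$ and $\dbE[\varphi]$ in both the $ds$ and $dW$ parts should yield exactly the drift $AY^*+\bar A\dbE[Y^*]+Bu^*+\bar B\dbE[u^*]+CZ^*+\bar C\dbE[Z^*]$ and diffusion $Z^*$ required by \eqref{16Aug22-16:20}. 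This is the reverse of the derivation performed in Section 3, so every cancellation is guaranteed by construction; the obstacle is purely computational, tracking the mean-field contributions and the two resolvents $(I+\Sigma R_1)^{-1}$ and $(I+\Sigma(R_1+\bar R_1))^{-1}$. Uniqueness of $u^*$ finally follows from Theorem \ref{bt-16Aug19-20:50}.
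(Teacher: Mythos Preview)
Your proposal is correct and follows essentially the same route as the paper: define $Y$ and $Z$ through the decoupling relations, verify by It\^o's formula and the Riccati equations that $(X,Y,Z,u)$ solves the optimality system \eqref{16Aug22-16:20}--\eqref{16Aug22-16:30}, and conclude optimality. One item to add to your checklist (i)--(iv) is the verification that the MF-FSDE \eqref{16Mar19-X} for $X$, once $Y$ and $Z$ are substituted in, coincides with the forward adjoint equation in \eqref{16Aug22-16:20}; the paper carries this out explicitly, and it is needed for your appeal to the optimality system to go through. Your justification of sufficiency via the strict convexity established in Theorem~\ref{bt-16Aug19-20:50} is slightly cleaner than the paper's direct reference to Theorem~\ref{bt-16Aug22-16:06}, which really relies on the nonnegativity of the second-order term in the expansion computed in that proof.
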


\begin{proof}
It is clear that \rf{16Mar19-X} has a unique solution $X(\cd)$.
So we need only prove that $u(\cd)$ defined by \rf{16Mar19-u*} is the unique optimal control
of Problem (MF-BSLQ) for the terminal state $\xi$. To this end, we define
\begin{eqnarray}
&&\label{16Aug25-Y} Y=-\,\Si(X-\dbE[X])-\G\dbE[X]-\f,\\
&&\label{16Aug25-Z} Z=(I+\Si R_1)^{-1}\big\{\Si C^\top(X-\dbE[X])-(\b-\dbE[\b])\big\}\\
&&\hphantom{Z=}+(I+\Si R_1+\Si\bar R_1)^{-1}\big\{\Si(C+\bar C)^\top\dbE[X]-\dbE[\b]\big\}.\nonumber
\end{eqnarray}
Then we have $Y(T)=\xi$ and
\begin{eqnarray}
&&\label{16Aug25-EY} \dbE[Y]=-\,\G\dbE[X]-\dbE[\f],\\
&&\label{16Aug25-EZ} \dbE[Z]=(I+\Si R_1+\Si\bar R_1)^{-1}\big\{\Si(C+\bar C)^\top\dbE[X]-\dbE[\b]\big\}.
\end{eqnarray}
Also, from \rf{16Aug25-Z} and \rf{16Aug25-EZ} we have
\bel{Z=}\ba{ll}
\ds Z+\Si R_1Z+\Si\bar R_1\dbE[Z]-\Si C^\top X-\Si\bar C^\top\dbE[X]+\b\\
\ns =(I+\Si R_1)Z+\Si\bar R_1\dbE[Z]-\Si C^\top X-\Si\bar C^\top\dbE[X]+\b\\
\ns =\Si C^\top(X-\dbE[X])-(\b-\dbE[\b])+(I+\Si R_1)\dbE[Z]\\
\ns\hphantom{=}+\Si\bar R_1\dbE[Z]-\Si C^\top X-\Si\bar C^\top\dbE[X]+\b\\
\ns =-\,\Si(C+\bar C)^\top\dbE[X]+\dbE[\b]+(I+\Si R_1+\Si\bar R_1)\dbE[Z]\\
\ns =0. \ea\ee
Thus, making use of \rf{16Aug22-Ric-Si}, \rf{16Aug22-Ric-G}, and \rf{Z=}, we have
\begin{eqnarray*}
dY &\4n=\4n&
-\,\dot\Si(X-\dbE[X])ds-\Si d(X-\dbE[X])-\dot\G\dbE[X]ds-\G d\dbE[X]-d\f\\
&\4n=\4n&
-\,\dot\Si(X-\dbE[X])ds+\Si\Big\{(A+\Si Q)^\top(X-\dbE[X])+Q(\f-\dbE[\f])\Big\}ds\\
&\4n~\4n&
-\,\Si\Big\{\big[R_1(I+\Si R_1)^{-1}\Si-I\big]C^\top X+\(\1n-\bar C^\top-R_1(I+\Si R_1)^{-1}\Si C^\top\\
&\4n~\4n&\hphantom{-\,\Si\Big\{}
+(R_1+\bar R_1)(I+\Si R_1+\Si\bar R_1)^{-1}\Si(C+\bar C)^\top\)\dbE[X]\\
&\4n~\4n&\hphantom{-\,\Si\Big\{}
-R_1(I+\Si R_1)^{-1}(\b-\dbE[\b])-(R_1+\bar R_1)(I+\Si R_1+\Si\bar R_1)^{-1}\dbE[\b]\Big\}dW\\
&\4n~\4n&
-\,\dot\G\dbE[X]ds+\G\Big\{\big[A+\bar A+\G(Q+\bar Q)\big]^\top \dbE[X]+(Q+\bar Q)\dbE[\f]\Big\}ds\\
&\4n~\4n&
-\,\Big\{(A+\Si Q)\f+\big[\bar A+\G(Q+\bar Q)-\Si Q\big]\dbE[\f]+C(I+\Si R_1)^{-1}\b\\
&\4n~\4n&\hphantom{-\,\Big\{}
+\big[(C+\bar C)(I+\Si R_1+\Si\bar R_1)^{-1}-C(I+\Si R_1)^{-1}\big]\dbE[\b]\Big\}ds-\b dW\\
&\4n=\4n&
\Big\{\(\1n-\dot\Si+\Si(A+\Si Q)^\top\)(X-\dbE[X])+\(-\dot\G+\G\big[A+\bar A+\G(Q+\bar Q)\big]^\top\)\dbE[X]\\
&\4n~\4n&\hphantom{\Big\{}
-A\f-\bar A\dbE[\f]-C(I+\Si R_1)^{-1}(\b-\dbE[\b])-(C+\bar C)(I+\Si R_1+\Si\bar R_1)^{-1}\dbE[\b]\Big\}ds\\
&\4n~\4n&
-\,\Big\{\Si R_1(I+\Si R_1)^{-1}\Si C^\top(X-\dbE[X])-\Si C^\top X-\Si\bar C^\top\dbE[X]\\
&\4n~\4n&\hphantom{-\,\Big\{}
+\Si(R_1+\bar R_1)(I+\Si R_1+\Si\bar R_1)^{-1}\Si(C+\bar C)^\top\dbE[X]-\Si R_1(I+\Si R_1)^{-1}(\b-\dbE[\b])\\
&\4n~\4n&\hphantom{-\,\Big\{}
-\Si(R_1+\bar R_1)(I+\Si R_1+\Si\bar R_1)^{-1}\dbE[\b]+\b\Big\}dW\\
&\4n=\4n&
\Big\{\(\1n-A\Si+BR_2^{-1}B^\top+C(I+\Si R_1)^{-1}\Si C^\top\)(X-\dbE[X])+\(-(A+\bar A)\G\\
&\4n~\4n&\hphantom{\Big\{}
+(B+\bar B)(R_2+\bar R_2)^{-1}(B+\bar B)^\top+(C+\bar C)(I+\Si R_1+\Si\bar R_1)^{-1}\Si(C+\bar C)^\top\)\dbE[X]\\
&\4n~\4n&\hphantom{\Big\{}
-A\f-\bar A\dbE[\f]-C(I+\Si R_1)^{-1}(\b-\dbE[\b])-(C+\bar C)(I+\Si R_1+\Si\bar R_1)^{-1}\dbE[\b]\Big\}ds\\
&\4n~\4n&
-\,\Big\{\Si R_1(I+\Si R_1)^{-1}\big\{\Si C^\top(X-\dbE[X])-(\b-\dbE[\b])\big\}
-\Si C^\top X-\Si\bar C^\top\dbE[X]+\b\\
&\4n~\4n&\hphantom{-\,\Big\{}
+\Si(R_1+\bar R_1)(I+\Si R_1+\Si\bar R_1)^{-1}\big\{\Si(C+\bar C)^\top\dbE[X]-\dbE[\b]\big\}\Big\}dW\\
&\4n=\4n&
\Big\{\1n-A\(\Si(X-\dbE[X])+\G\dbE[X]+\f\)-\bar A\(\G\dbE[X]+\dbE[\f]\)+BR_2^{-1}B^\top(X-\dbE[X])\\
&\4n~\4n&\hphantom{\Big\{}
+(B\1n+\1n\bar B)(R_2\1n+\1n\bar R_2)^{-1}(B\1n+\1n\bar B)^\top\dbE[X]\1n
+\1nC(I\1n+\1n\Si R_1)^{-1}\big\{\Si C^\top(X\1n-\1n\dbE[X])\1n-\1n(\b\1n-\1n\dbE[\b])\big\}\\
&\4n~\4n&\hphantom{\Big\{}
+(C+\bar C)(I+\Si R_1+\Si\bar R_1)^{-1}\big\{\Si(C+\bar C)^\top\dbE[X]-\dbE[\b]\big\}\Big\}ds\\
&\4n~\4n&
-\,\Big\{\Si R_1(Z-\dbE[Z])-\Si C^\top X-\Si\bar C^\top\dbE[X]+\b+\Si(R_1+\bar R_1)\dbE[Z]\Big\}dW\\
&\4n=\4n&
\Big\{AY+\bar A\dbE[Y]+B(u-\dbE[u])+(B+\bar B)\dbE[u]+C(Z-\dbE[Z])+(C+\bar C)\dbE[Z]\Big\}ds\\
&\4n~\4n&
-\,\Big\{\Si R_1Z+\Si\bar R_1\dbE[Z]-\Si C^\top X-\Si\bar C^\top\dbE[X]+\b\Big\}dW\\
&\4n=\4n&
\Big\{AY+\bar A\dbE[Y]+Bu+\bar B\dbE[u]+CZ+\bar C\dbE[Z]\Big\}ds+ZdW.
\end{eqnarray*}
Moreover, the first equation in \rf{16Mar19-X} can be written as
\begin{eqnarray*}
dX&\4n=\4n&
\Big\{\1n-A^\top X-\bar A^\top\dbE[X]-Q\(\Si(X-\dbE[X])+\G\dbE[X]+\f\)
-\bar Q\(\G\dbE[X]+\dbE[\f]\)\Big\}ds\\
&\4n~\4n&
+\,\Big\{\1n-C^\top X-\bar C^\top\dbE[X]+R_1(I+\Si R_1)^{-1}\(\Si C^\top(X-\dbE[X])-(\b-\dbE[\b])\)\\
&\4n~\4n&\hphantom{+\,\Big\{}
+(R_1+\bar R_1)\big[I+\Si(R_1+\bar R_1)\big]^{-1}\(\Si(C+\bar C)^\top\dbE[X]-\dbE[\b]\)\Big\}dW\\
&\4n=\4n&
\Big\{\1n-A^\top X-\bar A^\top\dbE[X]+QY+\bar Q\dbE[Y]\Big\}ds
+\Big\{\1n-C^\top X-\bar C^\top\dbE[X]+R_1Z+\bar R_1\dbE[Z]\Big\}dW.
\end{eqnarray*}
From the second equation in \rf{16Mar19-X}, we see
\begin{eqnarray}
\label{16Aug26:EX(t)}\dbE[X(t)]&\3n=\3n&-[I+(G+\bar G)\G(t)]^{-1}(G+\bar G)\dbE[\f(t)],\\
\label{16Aug26:X(t)-EX(t)}X(t)-\dbE[X(t)]&\3n=\3n&-[I+G\Si(t)]^{-1}G\{\f(t)-\dbE[\f(t)]\}.
\end{eqnarray}
\rf{16Aug25-EY} and \rf{16Aug26:EX(t)} yield
$$[I+(G+\bar G)\G(t)]\dbE[X(t)]=-(G+\bar G)\dbE[\f(t)]=(G+\bar G)\big\{\G\dbE[X(t)]+\dbE[Y(t)]\big\},$$
from which follows
\bel{16Aug25-16:20-1}\dbE[X(t)]=(G+\bar G)\dbE[Y(t)].\ee
Note that by \rf{16Aug25-Y} and \rf{16Aug25-EY},
$$Y(t)-\dbE[Y(t)]=-\Si(t)\big\{X(t)-\dbE[X(t)]\big\}-\big\{\f(t)-\dbE[\f(t)]\big\},$$
which, together with \rf{16Aug26:X(t)-EX(t)}, yields
\begin{eqnarray*}
[I+G\Si(t)]\big\{X(t)-\dbE[X(t)]\big\} &\3n=\3n& -\,G\big\{\f(t)-\dbE[\f(t)]\big\}\\
&\3n=\3n& G\(\Si(t)\big\{X(t)-\dbE[X(t)]\big\}+Y(t)-\dbE[Y(t)]\),
\end{eqnarray*}
from which follows
\bel{16Aug25-16:20-2}X(t)-\dbE[X(t)]=G\big\{Y(t)-\dbE[Y(t)]\big\}.\ee
Combining \rf{16Aug25-16:20-1}--\rf{16Aug25-16:20-2} we have
$$X(t)=GY(t)+\bar G\dbE[Y(t)].$$
Finally, observing that $u(\cd)$ defined by \rf{16Mar19-u*} satisfies
$$ R_2u+\bar R_2\dbE[u]-B^\top X-\bar B^\top \dbE[X]=0,$$
we see that $(X(\cd),Y(\cd),Z(\cd),u(\cd))$ solves the optimality system
\rf{16Aug22-16:20}--\rf{16Aug22-16:30}.
The result then follows immediately from Theorem \ref{bt-16Aug22-16:06}.
\end{proof}

We next present a formula for the value function of Problem (MF-BSLQ).

\begin{theorem}\label{bt-16Mar26-20:00}\sl
Let {\rm(H1)--(H2)} hold. Then the value function of Problem {\rm(MF-BSLQ)} is given by
\begin{eqnarray*}
V(t,\xi)\4n&=&\4n \dbE\Big\{\blan G[I+\Si(t)G]^{-1}(\f(t)-\dbE[\f(t)]),\f(t)-\dbE[\f(t)]\bran\\
\4n&~&\4n\hphantom{\dbE\Big\{}
+\blan(G+\bar G)[I+\G(t)(G+\bar G)]^{-1}\dbE[\f(t)],\dbE[\f(t)]\bran\\
\4n&~&\4n\hphantom{\dbE\Big\{}
+\int_t^T\[\lan Q(\f-\dbE[\f]),\f-\dbE[\f]\ran
+\blan(Q+\bar Q)\dbE[\f],\dbE[\f]\bran\\
\4n&~&\4n\hphantom{\dbE\Big\{+\int_t^T\[}
+\blan(I+R_1\Si)^{-1}R_1(\b-\dbE[\b]),\b-\dbE[\b]\bran\\
\4n&~&\4n\hphantom{\dbE\Big\{+\int_t^T\[}
+\blan[I+(R_1+\bar R_1)\Si]^{-1}(R_1+\bar R_1)\dbE[\b],\dbE[\b]\bran\]ds\Big\}.
\end{eqnarray*}
where $\Si(\cd)$ and $\G(\cd)$ are the unique solutions to the Riccati equations
\rf{16Aug22-Ric-Si} and \rf{16Aug22-Ric-G}, respectively, and $(\f(\cd),\b(\cd))$
is the unique adapted solution to the MF-BSDE \rf{16Aug22-f}.
\end{theorem}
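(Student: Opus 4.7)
The plan is first to reduce $V(t,\xi)$ to a simple dual pairing, and then to extract the claimed formula by applying It\^o's formula to $s\mapsto\langle X^*(s),\f(s)\rangle$. Throughout, write $\bar{\cd}$ for $(\cd)-\dbE[(\cd)]$, and let $(X^*(\cd),Y^*(\cd),Z^*(\cd),u^*(\cd))$ denote the optimal quadruple constructed in Theorem \ref{bt-16Mar22-20:00}.

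\textbf{Step 1 (reduction to a pairing with $\f$).} Apply It\^o's formula to $\langle X^*(s),Y^*(s)\rangle$ on $[t,T]$, using the optimality system \rf{16Aug22-16:20}--\rf{16Aug22-16:30}. After taking expectation, the $(\bar A,\bar C)$ cross-terms collapse via $\dbE\langle M\dbE[X^*],Y^*\rangle=\dbE\langle X^*,M\dbE[Y^*]\rangle$, and the stationarity condition converts $\langle B^\top X^*+\bar B^\top\dbE[X^*],u^*\rangle$ into $\langle R_2u^*,u^*\rangle+\langle\bar R_2\dbE u^*,\dbE u^*\rangle$. Combining with the boundary data $Y^*(T)=\xi$ and $X^*(t)=GY^*(t)+\bar G\,\dbE[Y^*(t)]$ gives
$$
V(t,\xi)=\dbE\langle X^*(T),\xi\rangle=-\dbE\langle X^*(T),\f(T)\rangle.
$$

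\textbf{Step 2 (It\^o between $t$ and $T$).} Apply It\^o's formula to $\langle X^*(s),\f(s)\rangle$ using \rf{16Mar19-X} and \rf{16Aug22-f}, to obtain
$$
V(t,\xi) = -\dbE\langle X^*(t),\f(t)\rangle - \dbE\int_t^T\Theta(s)\,ds,\q \Theta := \langle (dX^*)_{\hb{\tiny drift}},\f\rangle+\langle X^*,(d\f)_{\hb{\tiny drift}}\rangle+\langle (dX^*)_{\hb{\tiny vol}},\b\rangle.
$$
For the boundary term, the initial condition in \rf{16Mar19-X} combined with the operator identity $(I+BA)^{-1}B=B(I+AB)^{-1}$ (applied with $A=\Si(t),B=G$ and with $A=\G(t),B=G+\bar G$) yields
$$
-\dbE\langle X^*(t),\f(t)\rangle=\dbE\blan G[I+\Si(t)G]^{-1}\bar\f(t),\bar\f(t)\bran+\blan(G+\bar G)[I+\G(t)(G+\bar G)]^{-1}\dbE\f(t),\dbE\f(t)\bran,
$$
which is exactly the first two lines of the claimed formula.

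\textbf{Step 3 (drift simplification).} Upon taking expectation, the $(A+\Si Q)$ terms in the drifts of $X^*$ and $\f$ cancel pairwise, and the two symmetric pairings against $[\bar A+\G(Q+\bar Q)-\Si Q]$ cancel as well, leaving the residual $-Q\f-\bar Q\dbE\f$ from the drift of $X^*$, which contributes $-\dbE\langle Q\bar\f,\bar\f\rangle-\langle(Q+\bar Q)\dbE\f,\dbE\f\rangle$. For the $\b$-dependent terms, the identities $R_1(I+\Si R_1)^{-1}\Si=I-(I+R_1\Si)^{-1}$ and $R_1(I+\Si R_1)^{-1}=(I+R_1\Si)^{-1}R_1$ (and their $R_1+\bar R_1$ analogues) reduce the coefficient of $X^*$ in the volatility of $X^*$ to $-(I+R_1\Si)^{-1}C^\top$ and that of $\dbE[X^*]$ to $(I+R_1\Si)^{-1}C^\top-[I+(R_1+\bar R_1)\Si]^{-1}(C+\bar C)^\top$. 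After splitting into mean and mean-zero parts, the $X^*$-vs-$\b$ and $\dbE[X^*]$-vs-$\dbE\b$ contributions from $\langle\hb{vol}(X^*),\b\rangle$ cancel exactly against $\langle X^*,C(I+\Si R_1)^{-1}\b\rangle$ and $\langle X^*,[(C+\bar C)(I+\Si R_1+\Si\bar R_1)^{-1}-C(I+\Si R_1)^{-1}]\dbE\b\rangle$ from $d\f$. What remains is precisely $-\dbE\langle(I+R_1\Si)^{-1}R_1\bar\b,\bar\b\rangle-\langle[I+(R_1+\bar R_1)\Si]^{-1}(R_1+\bar R_1)\dbE\b,\dbE\b\rangle$, after rewriting via $(R_1+\bar R_1)[I+\Si(R_1+\bar R_1)]^{-1}=[I+(R_1+\bar R_1)\Si]^{-1}(R_1+\bar R_1)$. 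Summing all contributions and negating gives the integrand in the theorem.

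The main obstacle is the bookkeeping in Step 3, in particular verifying that the four sources of $X^*$- and $\dbE[X^*]$-cross-terms (the drifts of $X^*$ and $\f$, the volatility of $X^*$, and the cross product with $\b$) cancel exactly. The cancellation is in fact automatic because the drift of $X^*$ in \rf{16Mar19-X} and the drift of $\f$ in \rf{16Aug22-f} have been engineered from the Riccati equations \rf{16Aug22-Ric-Si}--\rf{16Aug22-Ric-G}; the role of Step~3 is just to make this engineering explicit.
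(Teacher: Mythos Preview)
Your proof is correct and takes a genuinely different route from the paper. The paper substitutes the representations $Y^*-\dbE[Y^*]=-\Si\bar X^*-\bar\f$, $\dbE[Y^*]=-\G\dbE[X^*]-\dbE[\f]$, etc.\ directly into the cost $J(t,\xi;u^*)$, then applies It\^o's formula to the \emph{quadratic} forms $\langle\Si(s)\bar X^*(s),\bar X^*(s)\rangle$ and $\langle\G(s)\dbE[X^*(s)],\dbE[X^*(s)]\rangle$; the Riccati equations \rf{16Aug22-Ric-Si}--\rf{16Aug22-Ric-G} are invoked to kill the $X^*$-quadratic terms in the resulting sum, and the boundary contribution is simplified at the end. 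Your approach instead uses two \emph{bilinear} It\^o pairings: first $\langle X^*,Y^*\rangle$ collapses the whole cost to $\dbE\langle X^*(T),\xi\rangle$ (this is essentially the adjoint computation behind Theorem~\ref{bt-16Aug22-16:06}, run at the optimum), and then $\langle X^*,\f\rangle$ pulls that pairing back to time $t$. The $X^*$-cross terms cancel because the drift of $\f$ in \rf{16Aug22-f} is precisely the formal adjoint of the drift of $X^*$ in \rf{16Mar19-X}, and likewise the $\b$-coefficients are adjoint to the $X^*$-coefficients in the volatility --- a duality that is implicit in the paper's derivation of \rf{16Aug22-f} but never exploited there. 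Your route is shorter and more conceptual; the paper's route is more self-contained in that it never needs Step~1 (the reduction $V(t,\xi)=\dbE\langle X^*(T),\xi\rangle$) and makes the role of the Riccati equations as ``completion-of-squares'' devices more visible.
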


\begin{proof}
Let $(Y^*(\cd),Z^*(\cd),u^*(\cd))$ be the optimal triple corresponding to the terminal state
$\xi\in L^2_{\cF_T}(\O;\dbR^n)$, and let $X^*(\cd)$ be the solution to MF-FSDE \rf{16Aug22-16:00}.
According to Theorem \ref{bt-16Aug22-16:06}, $(X^*(\cd),Y^*(\cd),Z^*(\cd),u^*(\cd))$ satisfies
the optimality system \rf{16Aug22-16:20}--\rf{16Aug22-16:30}.
On the other hand, let $X(\cd)$ be the solution to \rf{16Mar19-X}, and let $u(\cd)$, $Y(\cd)$,
and $Z(\cd)$ be defined by \rf{16Mar19-u*}, \rf{16Aug25-Y}, and \rf{16Aug25-Z}, respectively.
We recall from the proof of Theorem \ref{bt-16Mar22-20:00} that $(X(\cd),Y(\cd),Z(\cd),u(\cd))$
also satisfies the optimality system \rf{16Aug22-16:20}--\rf{16Aug22-16:30}.
By the uniqueness of optimal controls, we must have
$$(X^*(\cd),Y^*(\cd),Z^*(\cd),u^*(\cd))=(X(\cd),Y(\cd),Z(\cd),u(\cd)).$$
Thus, the value $V(t,\xi)$ is equal to
\begin{eqnarray*}
J(t,\xi;u(\cd))\4n&=&\4n \dbE\bigg\{\int_t^T\[\lan QY,Y\ran+\blan\bar Q\dbE[Y],\dbE[Y]\bran
+\lan R_1Z,Z\ran+\blan\bar R_1\dbE[Z],\dbE[Z]\bran\\
\4n&~&\4n\hphantom{\dbE\bigg\{}
+\lan R_2u,u\ran+\blan\bar R_2\dbE[u],\dbE[u]\bran\]ds
+\lan GY(t),Y(t)\ran+\blan\bar G\dbE[Y(t)],\dbE[Y(t)]\bran\bigg\}\\
\4n&=&\4n\dbE\bigg\{\int_t^T\[\lan Q(Y-\dbE[Y]),Y-\dbE[Y]\ran+\blan(Q+\bar Q)\dbE[Y],\dbE[Y]\bran\\
\4n&~&\4n\hphantom{\dbE\bigg\{}
+\lan R_1(Z-\dbE[Z]),Z-\dbE[Z]\ran+\blan(R_1+\bar R_1)\dbE[Z],\dbE[Z]\bran\\
\4n&~&\4n\hphantom{\dbE\bigg\{}
+\lan R_2(u-\dbE[u]),u-\dbE[u]\ran+\blan(R_2+\bar R_2)\dbE[u],\dbE[u]\bran\]ds\\
\4n&~&\4n\hphantom{\dbE\bigg\{}
+\lan G(Y(t)-\dbE[Y(t)]),Y(t)-\dbE[Y(t)]\ran
+\blan(G+\bar G)\dbE[Y(t)],\dbE[Y(t)]\bran\bigg\}.
\end{eqnarray*}
Noting that
\begin{eqnarray*}
\dbE[Y] &\3n=\3n& -\G\dbE[X]-\dbE[\f],\\
\dbE[Z] &\3n=\3n& \big[I+\Si (R_1+\bar R_1)\big]^{-1}\big\{\Si(C+\bar C)^\top\dbE[X]-\dbE[\b]\big\},\\
\dbE[u] &\3n=\3n& (R_2+\bar R_2)^{-1}(B+\bar B)^\top\dbE[X],\\
Y-\dbE[Y] &\3n=\3n& -\Si(X-\dbE[X])-(\f-\dbE[\f]),\\
Z-\dbE[Z] &\3n=\3n& (I+\Si R_1)^{-1}\big\{\Si C^\top(X-\dbE[X])-(\b-\dbE[\b])\big\},\\
u-\dbE[u] &\3n=\3n& R_2^{-1}B^\top(X-\dbE[X]),
\end{eqnarray*}
and using the fact that
$$(I+MN)^{-1}M = M(I+NM)^{-1},\qq \forall M,N\in\cl{\dbS^n_+},$$
it can be shown by a straightforward computation that
{\small\bel{16Mar27-18:00}\ba{ll}
\ds\dbE\int_t^T\[\lan Q(Y\1n-\1n\dbE[Y]),Y\1n-\1n\dbE[Y]\ran
\1n+\1n\lan R_1(Z\1n-\1n\dbE[Z]),Z\1n-\1n\dbE[Z]\ran
\1n+\1n\lan R_2(u\1n-\1n\dbE[u]),u\1n-\1n\dbE[u]\ran\]ds\\
\ns\ds=\dbE\int_t^T\bigg\{\Blan\[\Si Q\Si\1n+\1nC(I\1n+\1n\Si R_1)^{-1}\Si R_1\Si
(I\1n+\1nR_1\Si)^{-1}C^\top\2n+\1nBR_2^{-1}B^\top\](X\1n-\1n\dbE[X]),X\1n-\1n\dbE[X]\Bran\\
\ns\ds\hphantom{=}+2\lan X\1n-\1n\dbE[X],\Si Q(\f\1n-\1n\dbE[\f])\ran
-2\blan X-\dbE[X],C(I+\Si R_1)^{-1}\Si R_1(I+\Si R_1)^{-1}(\b-\dbE[\b])\bran\\
\ns\ds\hphantom{=}+\lan Q(\f-\dbE[\f]),\f-\dbE[\f]\ran
+\blan(I+R_1\Si)^{-1}R_1(I+\Si R_1)^{-1}(\b-\dbE[\b]),\b-\dbE[\b]\bran\bigg\}ds, \ea\ee}
and that
{\small\bel{16Mar27-18:10}\ba{ll}
\ds\int_t^T\[\blan(Q+\bar Q)\dbE[Y],\dbE[Y]\bran+\blan(R_1+\bar R_1)\dbE[Z],\dbE[Z]\bran
+\blan(R_2+\bar R_2)\dbE[u],\dbE[u]\bran\]ds\\
\ns\ds=\int_t^T\bigg\{\Blan\[\G(Q\1n+\1n\bar Q)\G
\1n+\1n(C\1n+\1n\bar C)\big[I\1n+\1n\Si(R_1\1n+\1n\bar R_1)\big]^{-1}
\Si(R_1\1n+\1n\bar R_1)\Si\big[I\1n+\1n(R_1\1n+\1n\bar R_1)\Si\big]^{-1}(C\1n+\1n\bar C)^\top\\
\ns\ds\qq\qq~~+(B+\bar B)(R_2+\bar R_2)^{-1}(B+\bar B)^\top\]\dbE[X],\dbE[X]\Bran
+2\blan\dbE[X],\G(Q+\bar Q)\dbE[\f]\bran\\
\ns\ds\hphantom{=}
-2\blan\dbE[X],(C+\bar C)\big[I+\Si(R_1+\bar R_1)\big]^{-1}
\Si(R_1+\bar R_1)\big[I+\Si(R_1+\bar R_1)\big]^{-1}\dbE[\b]\bran\\
\ns\ds\hphantom{=}
+\blan(Q\1n+\1n\bar Q)\dbE[\f],\dbE[\f]\bran+\blan\big[I\1n+\1n(R_1\1n+\1n\bar R_1)\Si\big]^{-1}
(R_1\1n+\1n\bar R_1)\big[I\1n+\1n\Si(R_1\1n+\1n\bar R_1)\big]^{-1}\dbE[\b],\dbE[\b]\bran\bigg\}ds. \ea\ee}
Observing that
$$\left\{\2n\ba{ll}
\ds d\dbE[X]=-\Big\{\big[A+\bar A+\G(Q+\bar Q)\big]^\top\dbE[X]+(Q+\bar Q)\dbE[\f]\Big\}ds,\\
\ns d(X-\dbE[X])=-\Big\{(A+\Si Q)^\top(X-\dbE[X])+Q(\f-\dbE[\f])\Big\}ds\\
\ns\q~-\Big\{(I+R_1\Si)^{-1}C^\top(X-\dbE[X])+\big[I+(R_1+\bar R_1)\Si\big]^{-1}(C+\bar C)^\top\dbE[X]\\
\ns\qq\q~+(I+R_1\Si)^{-1}R_1(\b-\dbE[\b])+\big[I+(R_1+\bar R_1)\Si\big]^{-1}(R_1+\bar R_1)\dbE[\b]\Big\}dW,
\ea\right.$$
we have by applying It\^{o}'s formula to $s\mapsto\lan\Si(s)(X(s)-\dbE[X(s)]),X(s)-\dbE[X(s)]\ran$,
\bel{16Mar27-18:20}\ba{ll}
-\,\dbE\blan\Si(t)\big\{X(t)-\dbE[X(t)]\big\},X(t)-\dbE[X(t)]\bran\\
\ns\ds=\dbE\int_t^T\Big\{
\Blan\[\dot\Si-(A+\Si Q)\Si-\Si(A+\Si Q)^\top+C(I+\Si R_1)^{-1}\Si(I+R_1\Si)^{-1}C^\top\]\\
\ns\ds\hphantom{=\dbE\int_t^T\Big\{\Blan}
\cd(X-\dbE[X]),X-\dbE[X]\Bran-2\lan X-\dbE[X],\Si Q(\f-\dbE[\f])\ran\\
\ns\ds\hphantom{=\dbE\int_t^T\Big\{\Blan}
+2\blan X-\dbE[X],C(I+\Si R_1)^{-1}\Si R_1(I+\Si R_1)^{-1}(\b-\dbE[\b])\bran\\
\ns\ds\hphantom{=\dbE\int_t^T\Big\{\Blan}
+\lan(I+R_1\Si)^{-1}R_1\Si R_1(I+\Si R_1)^{-1}(\b-\dbE[\b]),\b-\dbE[\b]\ran\Big\}ds
\ea\ee 
$$\ba{ll}
\ds\hphantom{=}
+\int_t^T\Big\{\blan(C+\bar C)\big[I+\Si(R_1+\bar R_1)\big]^{-1}
\Si\big[I+(R_1+\bar R_1)\Si\big]^{-1}(C+\bar C)^\top\dbE[X],\dbE[X]\bran\\
\ns\ds\hphantom{=}
+2\blan\dbE[X],(C+\bar C)\big[I+\Si(R_1+\bar R_1)\big]^{-1}
\Si(R_1+\bar R_1)\big[I+\Si(R_1+\bar R_1)\big]^{-1}\dbE[\b]\bran\\
\ns\ds\hphantom{=}
+\blan\big[I+(R_1+\bar R_1)\Si\big]^{-1}
(R_1+\bar R_1)\Si(R_1+\bar R_1)\big[I+\Si(R_1+\bar R_1)\big]^{-1}\dbE[\b],\dbE[\b]\bran\Big\}ds,
\ea$$ 
and by applying the integration by parts formula to $s\mapsto\lan\G(s)\dbE[X(s)],\dbE[X(s)]\ran$,
we have
\bel{16Mar27-18:30}\ba{ll}
-\,\lan\G(t)\dbE[X(t)],\dbE[X(t)]\ran\\
\ns\ds=\int_t^T\Big\{\Blan\(\dot\G-\big[A+\bar A+\G(Q+\bar Q)\big]\G
-\G\big[A+\bar A+\G(Q+\bar Q)\big]^\top\)\dbE[X],\dbE[X]\Bran\\
\ns\ds\hphantom{=\int_t^T\Big\{}
-2\blan\dbE[X],\G(Q+\bar Q)\dbE[\f]\bran\Big\}ds.\ea\ee
Now adding equations \rf{16Mar27-18:00}, \rf{16Mar27-18:10}, \rf{16Mar27-18:20}
and \rf{16Mar27-18:30} yields
\bel{16Aug28-15:44}\ba{lll}
\ds V(t,\xi)\4n&=&\4n \dbE\Big\{\lan G(Y(t)-\dbE[Y(t)]),Y(t)-\dbE[Y(t)]\ran
+\blan(G+\bar G)\dbE[Y(t)],\dbE[Y(t)]\bran\\
\ns\4n&~&\4n\ds\hphantom{\dbE\Big\{}
+\blan\Si(t)\big\{X(t)-\dbE[X(t)]\big\},X(t)-\dbE[X(t)]\bran
+\lan\G(t)\dbE[X(t)],\dbE[X(t)]\ran\Big\}\\
\ns\4n&~&\4n\ds+\,\dbE\int_t^T\Big\{\lan Q(\f-\dbE[\f]),\f-\dbE[\f]\ran
+\blan(Q+\bar Q)\dbE[\f],\dbE[\f]\bran\\
\ns\4n&~&\4n\ds\hphantom{+\,\dbE\int_t^T\Big\{}
+\blan(I+R_1\Si)^{-1}R_1(\b-\dbE[\b]),\b-\dbE[\b]\bran\\
\ns\4n&~&\4n\ds\hphantom{+\,\dbE\int_t^T\Big\{}
+\blan\big[I+(R_1+\bar R_1)\Si\big]^{-1}(R_1+\bar R_1)\dbE[\b],\dbE[\b]\bran\Big\}ds.
\ea\ee
Recalling that
$$\dbE[Y]=-\G\dbE[X]-\dbE[\f],\qq Y-\dbE[Y]=-\Si(X-\dbE[X])-(\f-\dbE[\f]),$$
and noting that
\begin{eqnarray*}
\dbE[X(t)]\3n&=&\3n-[I+(G+\bar G)\G(t)]^{-1}(G+\bar G)\dbE[\f(t)],\\
X(t)-\dbE[X(t)]\3n&=&\3n-[I+G\Si(t)]^{-1}G\{\f(t)-\dbE[\f(t)]\},
\end{eqnarray*}
we obtain
\begin{eqnarray*}
&&\dbE\Big\{\lan G(Y(t)-\dbE[Y(t)]),Y(t)-\dbE[Y(t)]\ran
+\blan(G+\bar G)\dbE[Y(t)],\dbE[Y(t)]\bran\\
&&\hphantom{\dbE\Big\{}
+\blan\Si(t)\big\{X(t)-\dbE[X(t)]\big\},X(t)-\dbE[X(t)]\bran
+\lan\G(t)\dbE[X(t)],\dbE[X(t)]\ran\Big\}\\
&&=\dbE\Big\{\blan G\big\{\Si(t)(X(t)\1n-\1n\dbE[X(t)])\1n+\1n(\f(t)\1n-\1n\dbE[\f(t)])\big\},
\Si(t)(X(t)\1n-\1n\dbE[X(t)])\1n+\1n(\f(t)\1n-\1n\dbE[\f(t)])\bran\\
&&\hphantom{=\dbE\Big\{}
+\blan(G+\bar G)\big\{\G(t)\dbE[X(t)]+\dbE[\f(t)]\big\},\G(t)\dbE[X(t)]+\dbE[\f(t)]\bran\\
&&\hphantom{=\dbE\Big\{}
+\blan\Si(t)\big\{X(t)-\dbE[X(t)]\big\},X(t)-\dbE[X(t)]\bran
+\lan\G(t)\dbE[X(t)],\dbE[X(t)]\ran\Big\}\\
&&=\dbE\Big\{\lan\Si(t)[I+G\Si(t)](X(t)-\dbE[X(t)]),X(t)-\dbE[X(t)]\ran\\
&&\hphantom{=\dbE\Big\{}
+2\lan G\Si(t)(X(t)-\dbE[X(t)]),\f(t)-\dbE[\f(t)]\ran
+\lan G(\f(t)-\dbE[\f(t)]),\f(t)-\dbE[\f(t)]\ran\\
&&\hphantom{=\dbE\Big\{}
+\blan\G(t)[I+(G+\bar G)\G(t)]\dbE[X(t)],\dbE[X(t)]\bran
+2\blan(G+\bar G)\G(t)\dbE[X(t)],\dbE[\f(t)]\bran\\
&&\hphantom{=\dbE\Big\{}
+\blan(G+\bar G)\dbE[\f(t)],\dbE[\f(t)]\bran\Big\}\\
&&=\dbE\Big\{\blan G[I+\Si(t)G]^{-1}(\f(t)-\dbE[\f(t)]),\f(t)-\dbE[\f(t)]\bran\\
&&\hphantom{=\dbE\Big\{}
+\blan(G+\bar G)[I+\G(t)(G+\bar G)]^{-1}\dbE[\f(t)],\dbE[\f(t)]\bran\Big\}.
\end{eqnarray*}
Substitution of the above into \rf{16Aug28-15:44} completes the proof.
\end{proof}


\end{document}